\newtheorem{proposition}{Proposition}[section]
\newtheorem{theorem}{Theorem}[section]
\newtheorem{definition}{Definition}[section]
\newtheorem{corollary}{Corollary}[section]
\newtheorem{lemma}{Lemma}[section]
\newtheorem{remark}{Remark}[section]
\numberwithin{equation}{section}
\begin{document}
\markboth{}{}

\title[A Maximal Inequality of the 2D Young Integral based on Bivariations] {A Maximal Inequality of the 2D Young Integral based on Bivariations}



\author{Alberto Ohashi}

\address{Departamento de Matem\'atica, Universidade Federal da Para\'iba, 13560-970, Jo\~ao Pessoa - Para\'iba, Brazil}\email{alberto.ohashi@pq.cnpq.br; ohashi@mat.ufpb.br}

\author{Alexandre B. Simas}

\address{Departamento de Matem\'atica, Universidade Federal da Para\'iba, 13560-970, Jo\~ao Pessoa - Para\'iba, Brazil}\email{alexandre@mat.ufpb.br}

\date{\today}

\keywords{Young Integral, p-variation, $(p,q)$-Bivariation, Local-times; Brownian motion} \subjclass{}

\begin{center}

\end{center}

\begin{abstract}
In this note, we establish a novel maximal inequality of the 2D Young integral $\int_a^b\int_c^d FdG$ in terms of the $(p,q)$-bivariation norms of the
section functions $x\mapsto F(x,y)$ and $y\mapsto F(x,y)$ where $G:[a,b]\times [c,d]\rightarrow \mathbb{R}$ is a controlled path satisfying finite $(p,q)$-variation conditions. The proof is reminiscent from the Young's original ideas~\cite{young1} in defining two-parameter integrals in terms of
$(p,q)$-finite bivariations. Our result complements the standard maximal inequality established by Towghi~\cite{towghi1} in terms of joint variations. We apply the maximal inequality to get novel strong approximations for 2D Young integrals w.r.t the Brownian local time in terms of number of upcrossings of a given approximating random walk.
\end{abstract}

\maketitle

\section{Preliminaries and Main Result}
One remarkable result in the seminal L.C Young's article~\cite{young} is the development of a (1-parameter) 1D Riemman-Stieltjes-type integral $\int fdg$ where $f,g:[a,b]\rightarrow \mathbb{R}$ are two functions with suitable finite variations (see e.g~\cite{friz})

$$\|f\|^p_{[a,b],p}:= \sup_{\Pi}\sum_{x_i\in \Pi}|f(x_i)-f(x_{i-1})|^p <\infty, \quad\hbox{and}\quad \|g\|^q_{[a,b],q}:= \sup_{\Pi}\sum_{x_i\in \Pi}|g(x_i)-g(x_{i-1})|^q <\infty,$$
where $\frac{1}{p} + \frac{1}{q}> 1$ and $\sup$ is taken over all partitions of the compact set $[a,b]\subset \mathbb{R}$. Let $\mathcal{W}^p([a,b];\mathbb{R})$ be the linear space of real-valued functions $h$ equipped with the seminorm $\|h\|_{[a,b],p} < \infty$. In his seminal article in 1936, Young proved that if $f\in \mathcal{W}^p([a,b];\mathbb{R})$ and $g\in \mathcal{W}^q([a,b];\mathbb{R})$ are two continuous functions, then there exists an absolute constant $C>0$ such that

\begin{equation}\label{in1}
\Big|\int_a^bfdg \Big|\le C \Big[|f(a)| + \|f\|_{[a,b],p}\Big]\|g\|_{[a,b],q},
\end{equation}
provided $\frac{1}{p} + \frac{1}{q}> 1$.

The 1D Young's integration theory has great importance in many areas in Analysis and Probability. In particular, it was the starting point for T. Lyons~(see e.g~\cite{lyons}) to introduce his original ideas on the so-called Rough Path theory where higher-order increments of the functions play a key role in determining integrals beyond the constraint $\frac{1}{p} + \frac{1}{q} > 1$. The 2D Young integral was introduced by L.C Young~\cite{young1} and recently it has been an important tool in the Gaussian rough path theory~\cite{friz2,hairer,gubinelli}, extensions of It\^o formula~\cite{feng} and functional stochastic calculus~\cite{LOS}. See also~\cite{towghi1} for a particular multi-dimensional extension of the 2D Young integral. In the sequel, let us recall some basic definitions from the original article~\cite{young1}.

Throughout this article, we are going to fix $-\infty < a < b < +\infty$ and $-\infty < c < d < +\infty$. Let $\Pi(\xi)=\{x_i; 0\le i \le N\}$ be a partition of $[a,b]$ equipped with a set of points $\xi=\{\xi_i; i=1,\ldots,N\}$, where $x_{i-1}\le \xi_i\le x_i$ and $x_0=a,~x_N=b$. We call $\Pi(\xi)$ a tagged partition of $[a,b]$. Similarly, let $\Pi'(\eta)=\{y_j; 0\le j\le N'\}$ be a partition of $[c,d]$ equipped with a set of points $\eta=\{\eta_j; j=1,\ldots,N'\}$ such that $y_{j-1}\le \eta_j\le y_j$ and $y_0=c$ and $y_{N'}=d$. We call $\Pi'(\eta)$ a tagged partition of $[c,d]$.

\begin{definition}
Let $\Pi(\xi)$ be a tagged partition of $[a,b]$ and let
$f:[a,b]\to\mathbb{R}$ be a given function. We say $f_{\Pi(\xi)}:[a,b]\to\mathbb{R}$ is a \textbf{step function} of $f$ based on $\Pi(\xi)$ if $f_{\Pi(\xi)}(x_i) = f(x_i)$, for $i=0,\ldots, N$, and $f_{\Pi(\xi)}(x) = f(\xi_i)$, if $x_{i-1}<x<x_i$, $i=1,\ldots,N$.
\end{definition}

It is immediate from the definition that $\|f_{\Pi(\xi)}\|_{[a,b],p}\le \|f\|_{[a,b],p}$ for every tagged partition $\Pi(\xi)$. Throughout this article, we make use of the following terminology. $\Pi(\xi)$ and $\Pi'(\eta)$ will denote tagged partitions, whereas the notations $\Pi_j$ and $\Pi_j'$ stand for ``untagged'' partitions of $[a,b]$ and $[c,d]$, respectively. Finally, we say that an ``untagged'' partition $\Pi$ refines a tagged partition $\Pi(\xi)$, if $\Pi$ refines the partition $\Pi(\xi)$ without taking into account the set $\xi$.

\begin{definition}
Let $\Pi(\xi)$ and $\Pi'(\eta)$ be tagged partitions of $[a,b]$ and $[c,d]$, respectively, and let $F:[a,b]\times [c,d]\to\mathbb{R}$ be a given function. We say that $F_{\Pi(\xi),\Pi'(\eta)}:[a,b]\times [c,d]\to\mathbb{R}$ is a \textbf{step function of} $F$ \textbf{on} $(\Pi(\xi),\Pi'(\eta))$, if $F_{\Pi(\xi),\Pi'(\eta)}(x_i,y_j) = F(x_i,y_j),$ for $i=0,\ldots,N$, $j=0,\ldots,N'$; $F_{\Pi(\xi),\Pi'(\eta)}(x_i,y) = F(x_i,\eta_j)$, if $y_{j-1}<y<y_j$, $i=0,\ldots,N$ and $j=1,\ldots,N'$; $F_{\Pi(\xi),\Pi'(\eta)}(x,y_j) = F(\xi_i,y_j),$ if $x_{i-1}<x<x_i$, $i=1,\ldots,N$ and $j=0,\ldots,N'$; $F_{\Pi(\xi),\Pi'(\eta)}(x,y) = F(\xi_i,\eta_j)$ if $x_{i-1} < x < x_i$ and $y_{j-1}< y < y_j$ for $i=1,\ldots,N$ and $j=1,\ldots, N'$.
\end{definition}

In the sequel, $\Delta_iH(x_i,y_j): = H(x_i,y_j)-H(x_{i-1},y_j)$ denotes the first difference operator acting on the variable $x$ of a
given function $H:[a,b]\times[c,d]\rightarrow \mathbb{R}$, whereas $\Delta_jH(x_i,y_j) = H(x_i,y_j)-H(x_i,y_{j-1})$ denotes the first
difference operator acting on the variable $y$ of $H$.

Let $F_{\Pi(\xi),\Pi'(\eta)}$ be a step function for a given $F$ and let $G$ be a function such that $(x,y)\mapsto G(x,y)-G(\alpha,y) -G(x,\beta)+G(\alpha,\beta) $ admits only points of discontinuity of first kind for any $\alpha\in [a,b],~\beta\in [c,d]$. Then, we define

$$
\int_a^b\int_c^d F_{\Pi(\xi),\Pi'(\eta)}(x,y)d_{x,y}G(x,y):=\sum_{j=1}^N\sum_{i=1}^{N'}F(\xi_{i},\eta_{j})\Delta_i\Delta_j G(x_i,y_j).
$$
We are now in position to recall the classical definition of the 2D Young integral, see also Section 4 in~\cite{young1}. Let $F,G:[a,b]\times [c,d]\rightarrow \mathbb{R}$ be two functions (which we would like to emphasize that these functions do not need to be continuous).
We say that the Young integral
$$\int_a^b\int_c^d F(x,y)d_{x,y}G(x,y)$$
exists (in the generalized Moore-Pollard sense) and it is equal to a real number $I$ if for every $\epsilon>0$, there exist finite subsets $E$ and $E'$ of $[a,b]$ and $[c,d]$, respectively, such that
for every tagged partitions $\Pi(\xi)$ and $\Pi'(\eta)$, with the partition $\Pi(\xi)$ containing the points in $E$, and $\Pi'(\eta)$ containing
the points in $E'$, the Riemann-Stieltjes integral of the step function $F_{\Pi(\xi),\Pi'(\eta)}$ with respect to $G$ satisfies
$$\left| \int_a^b\int_c^d F_{\Pi(\xi),\Pi'(\eta)}(x,y)d_{x,y}G(x,y) - I\right| < \epsilon.$$


The following notion is originally due to Young~\cite{young1} and it will play a key role in this work:
\begin{definition}\label{pq}
We say that $F:[a,b]\times [c,d]\rightarrow \mathbb{R}$ has $(p,q)$-bivariation for $p,q >0$ if

$$\|F\|_{1;p}:=\sup_{y_1,y_2\in [c,d]^2}\| F(\cdot, y_1) - F(\cdot, y_2)\|_{[a,b],p}< \infty,$$
and
$$\|F\|_{2;q}:=\sup_{x_1,x_2\in [a,b]^2}\| F(x_1, \cdot) - F(x_2,\cdot)\|_{[c,d],q}< \infty.$$
\end{definition}



There is a very related notion of variation which takes into account joint variation in both variables rather than bivariations  ~(see~e.g~\cite{friz,friz1,towghi,towghi1}):

\begin{definition}\label{jointdef}
Let $p\in [ 1, \infty)$. A  function $F:[a,b]\times [c,d]\rightarrow \mathbb{R}$ has finite $p$-variation if

$$V_p(F): = \Bigg(\sup_{\Pi,\Pi'}\sum_{\substack{x_i\in \Pi\\y_j\in\Pi'}} |\Delta_i\Delta_j F|^p\Bigg)^{\frac{1}{p}}< \infty,$$
where the supremum varies over all partitions $\Pi$ of $[a,b]$ and $\Pi'$ of $[c,d]$.
\end{definition}
The linear space of real-valued functions defined on $[a,b]\times [c,d]$ having finite $p$-variation equipped with the seminorm $V_p(F)$ will be denoted by $\mathcal{W}^p([a,b]\times [c,d];\mathbb{R})$. The following remarks show that the the joint variation notion is actually stronger than bivariations.

\begin{proposition}
 Let $F:[a,b]\times [c,d]\to \mathbb{R}$, then
 $$\|F\|_{1;p}\leq V_p(F)\quad \text{and}\quad \|F\|_{2;q} \leq V_q(F).$$
\end{proposition}
\begin{proof}
We will only prove the first inequality, since the other one is entirely analogous. In the case where the supremum is attained at $y_1=c$ and $y_2=d$, the inequality is obvious. Let us assume $c\le y_1< y_2\le d$, and consider the partition $\Pi'= \{s_j\}$,
with $s_0=c; s_1=y_1; s_2 =y_2; s_3 = d$. Then, we have
\begin{eqnarray*}
\|F(\cdot,y_1) - F(\cdot,y_2)\|_{[a,b],p}^p &=& \sup_{\Pi} \sum_{x_i\in\Pi} |F(x_i,y_2)-F(x_i,y_1)-F(x_{i-1},y_2) + F(x_{i-1},y_1)|^p\\
&=& \sup_{\Pi}\sum_{x_i\in\Pi} |F(x_i,s_2)-F(x_i,s_1)-F(x_{i-1},s_2) + F(x_{i-1},s_1)|^p\\
&\leq& \sup_{\Pi}\sum_{s_j\in \Pi'} \sum_{x_i\in\Pi} |\Delta_i\Delta_j F(x_i,s_j)|^p\\
&\leq&V^p_p(F).
\end{eqnarray*}
Now, one can take the supremum on the left-hand side of the inequality. This concludes the proof.
\end{proof}

\begin{remark}
One should notice that $F\in \mathcal{W}^p([a,b]\times [c,d];\mathbb{R})$ if, and only if, both section functions $x\mapsto F(x,\cdot)$ and $y\mapsto F(\cdot,y)$ are $\mathcal{W}^p([\gamma,\eta];\mathbb{R})$-valued $p$-variation functions where $\gamma=c,\eta=d$ and $\gamma=a,\eta=b$, respectively. Moreover, if $F(a,\cdot)=F(\cdot,c)=0$ vanish, then $F$ has $(p,q)$-bivariation if, and only if, $x\mapsto F(x,\cdot)$ and
$y\mapsto F(\cdot,y)$ are $\mathcal{W}^q$-valued bounded resp. $\mathcal{W}^p$-valued bounded functions. In this case, both $\big(\sup_{x\in [a,b]}\|F(x,\cdot)\|_{[c,d],q},\|F\|_{2;q}\big)$ and $\big(\sup_{y\in [c,d]}\|F(\cdot,y)\|_{[a,b],p},\|F\|_{1;p}\big)$ are equivalent.
\end{remark}

The importance of $(p,q)$-bivariation lies in the following result, which is a particular case of a theorem due to L. C. Young.

\begin{theorem}[Young~\cite{young1}, Th. 6.3]\label{tyoung}
Let $p,q>0$, and let $\rho$, $\sigma$, $\mu$ and $\lambda$ be monotone increasing functions such that, $\rho$ and $\sigma$ are subject
to $\rho(u)\sigma(u)=u$. Assume that

\begin{equation}\label{i}
\sum_{n=1}^\infty \rho\left(\frac{1}{n^{\frac{1}{p}}}\right)\lambda\left(\frac{1}{n}\right)<\infty~~\hbox{and}~~\sum_{n=1}^\infty \sigma\left(\frac{1}{n^{\frac{1}{q}}}\right)\mu\left(\frac{1}{n}\right)<\infty.
\end{equation}

\noindent Let $F:[a,b]\times [c,d]\rightarrow \mathbb{R}$ be a function which vanishes on the lines $x=a$ and $y=c$ and which has bounded $(p,q)$-bivariation. Let $G
:[a,b]\times [c,d]\rightarrow \mathbb{R}$ be a function satisfying

\begin{equation}\label{s}
|\Delta_i\Delta_j G(x_i,y_j)|\le \lambda(x_i-x_{i-1}) \mu(y_j-y_{j-1}).
\end{equation}
Then the 2D Young integral $\int_a^b\int_c^d FdG$ exists. That is, for each $\epsilon>0$, there exist finite subsets $E\subset [a,b]$ and $E^{'}\subset [c,d]$ such that
$$\left|\int_a^b\int_c^d F(x,y)d_{x,y}G(x,y) -   \int_a^b\int_c^d F_{\Pi(\xi),\Pi'(\eta)}(x,y)d_{x,y}G(x,y)\right| < \epsilon$$
for every tagged partitions $\Pi(\xi)$ and $\Pi'(\eta)$ which contain points of $E$ and $E'$, respectively.
\end{theorem}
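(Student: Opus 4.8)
The plan is to follow Young's original point-dropping method and reduce the existence assertion to a single \emph{refinement estimate}: I will show that if a tagged pair $(\Pi(\xi),\Pi'(\eta))$ is refined to $(\Pi^*,\Pi'^*)$, then the difference of the two double Riemann--Stieltjes sums is controlled by a tail of the two convergent series in (\ref{i}). Once this is available, any two tagged pairs that contain a common finite set of nodes possess a common refinement and are therefore within $\epsilon$ of one another; taking $E$ and $E'$ to be the nodes of a partition for which both tails fall below $\epsilon$ gives exactly the Moore--Pollard criterion and hence the limit $I$. Throughout I will use that $F$ vanishes on $x=a$ and $y=c$, so that Definition \ref{pq} controls not only the mixed second differences but also the pure sections: $\|F(\cdot,y)\|_{[a,b],p}=\|F(\cdot,y)-F(\cdot,c)\|_{[a,b],p}\le\|F\|_{1;p}$ and, symmetrically, $\|F(x,\cdot)\|_{[c,d],q}\le\|F\|_{2;q}$, uniformly in $y$ and $x$.

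The heart of the matter is the effect of deleting a single interior $x$-node. Writing $S$ for the double sum and choosing the merged tag to be $\xi_k$, a direct computation shows that removing $x_k$ changes $S$ by exactly $\sum_{j}\big(F(\xi_{k+1},\eta_j)-F(\xi_k,\eta_j)\big)\,\Delta_{k+1}\Delta_j G(x_{k+1},y_j)$, which is itself a one-dimensional Young sum in the $y$-variable. Its integrand $y\mapsto F(\xi_{k+1},y)-F(\xi_k,y)$ has $q$-variation at most $\|F\|_{2;q}$ and vanishes at $y=c$, while its integrator $y\mapsto G(x_{k+1},y)-G(x_k,y)$ has, by (\ref{s}), increments dominated by $\lambda(x_{k+1}-x_k)\,\mu(\cdot)$. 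The one-parameter form of Young's theorem---whose proof is precisely the point-dropping inequality made summable by the second series $\sum_n\sigma(n^{-1/q})\mu(1/n)<\infty$---then bounds this single-node change by $C\,\|F\|_{2;q}\,\lambda(x_{k+1}-x_k)$, and an entirely symmetric statement holds for deleting a $y$-node, governed by $\|F\|_{1;p}$ and the first series.

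Next I would iterate the deletions, peeling the two partitions down to the trivial ones. At each stage I compare the cheapest available $x$-deletion with the cheapest available $y$-deletion and perform whichever is smaller; the factorization $\rho(u)\sigma(u)=u$ is what matches the variation scale $m^{-1/p}$ (resp.\ $m^{-1/q}$) produced by the pigeonhole choice of the cheapest node to the modulus scale $\lambda(1/m)$ (resp.\ $\mu(1/m)$) furnished by (\ref{s}). The per-stage cost is then of the order $\rho(m^{-1/p})\lambda(1/m)\,\|F\|_{1;p}\|F\|_{2;q}$ or its $(q,\sigma,\mu)$-counterpart, so summing the whole telescoping reproduces the two series in (\ref{i}) and yields the refinement estimate, which completes the argument.

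I expect the main obstacle to lie precisely in the coupling of the two directions. A single $x$-deletion is clean only because it collapses into a one-dimensional $y$-sum, but that $y$-sum is weighted by the $\mathcal{W}^q$-norm $\|F(\xi_{k+1},\cdot)-F(\xi_k,\cdot)\|_{[c,d],q}$, and bounded $(p,q)$-bivariation---unlike the stronger joint $p$-variation of Definition \ref{jointdef}---supplies only a uniform sup-bound on these norms, \emph{not} their $p$-summability in $k$. Consequently one cannot naively average to extract a factor $\rho(m^{-1/p})$ in a single fixed direction, and the deletions must be organized (alternating, cheapest-first across both variables) so that the bounded rather than summable bivariation still produces a convergent doubly-telescoped total through the joint convergence of the two hypotheses in (\ref{i}). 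Verifying that the $x$-errors do not accumulate across the $y$-blocks, and conversely, is the delicate bookkeeping on which the whole theorem turns, and it is exactly here that the bivariation hypothesis is consumed to its full strength.
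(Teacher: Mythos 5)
Your reduction to the Moore--Pollard criterion, your computation of the effect of deleting one interior $x$-node (the difference is exactly $\sum_j\big(F(\xi_{k+1},\eta_j)-F(\xi_k,\eta_j)\big)\Delta_{k+1}\Delta_jG(x_{k+1},y_j)$, a one-dimensional Young sum in $y$), and the bound $C\,\|F\|_{2;q}\,\lambda(x_{k+1}-x_k)$ for that single deletion are all correct. The gap is exactly where you place it, and it is fatal to the scheme as described rather than a matter of ``delicate bookkeeping'': summing the per-deletion bound over a full peeling of the $x$-partition gives, after the pigeonhole choice of the cheapest node among $m$ remaining ones, a total of order $\|F\|_{2;q}\sum_m\lambda\big(C'/m\big)$, and this series diverges in general under hypothesis (\ref{i}): take $\rho(u)=u^{\alpha}$, $\lambda(u)=u^{1/\tilde{p}}$ with $\tilde{p}>1$ and $\alpha/p+1/\tilde{p}>1$ (the setting of Corollary \ref{mainbound}); then (\ref{i}) holds while $\sum_m\lambda(1/m)=\sum_m m^{-1/\tilde{p}}=\infty$. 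Your claimed per-stage cost $\rho\big(\|F\|_{1;p}m^{-1/p}\big)\lambda(1/m)\|F\|_{2;q}$ would require the extra factor $\rho\big(\|F\|_{1;p}m^{-1/p}\big)$ to come from somewhere, i.e.\ it would require the section norms $\|F(\xi_{k+1},\cdot)-F(\xi_k,\cdot)\|_{[c,d],q}$ to be small for the cheapest $k$, hence some summability of these norms in $k$; bounded $(p,q)$-bivariation supplies only the uniform bound $\|F\|_{2;q}$, as you yourself observe, and no ordering of the deletions (alternating, cheapest-first or otherwise) is shown to repair this. In effect the proposal asserts precisely the estimate that constitutes the theorem and then concedes it cannot be obtained by the stated method; that is a missing proof, not a proof.

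The way out --- Young's, and the one this paper adapts quantitatively in Section 2 --- is to abandon one-point deletions altogether. One compares the Riemann--Stieltjes sums along chains of partitions $\Pi_0\subset\cdots\subset\Pi_M$ and $\Pi_0'\subset\cdots\subset\Pi_{M'}'$ whose cardinalities double ($\#\Pi_k=2^k+1$) while the meshes stay below $4\cdot2^{-k}$. A summation by parts in each variable (Lemmas \ref{onedim} and \ref{d1d2S}) turns the second difference of the double sums into $\Delta_1\Delta_2S_{k,k'}=-\sum_{i,j}\Delta_i\Delta_jF\cdot\Delta_i\Delta_jG$, and the decisive point is that the \emph{aggregate} $\sum_{i,j}|\Delta_i\Delta_jF|$ over the whole $2^k\times2^{k'}$ grid admits two simultaneous bounds, $4\cdot2^{k+k'}\|F\|_{1;p}2^{-k/p}$ and $4\cdot2^{k+k'}\|F\|_{2;q}2^{-k'/q}$ (Lemma \ref{boundd1d2f}), each obtained by H\"older averaging \emph{inside} the sum using only the uniform bivariation bounds. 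The factorization $\rho(u)\sigma(u)=u$ is then applied to this aggregate via Lemma \ref{boundsigma}, where both bounds are available at once --- never to an individual deletion, where only one of them is. Combined with $|\Delta_i\Delta_jG|\le\lambda(2^{-k+2})\mu(2^{-k'+2})$, which follows from (\ref{s}) and the mesh control, the doubly telescoped total $\sum_{k,k'}|\Delta_1\Delta_2S_{k,k'}|$ converges under (\ref{i}) (Proposition \ref{propdesig}), and the Moore--Pollard criterion follows. This grid-level use of $\rho$ and $\sigma$ --- deleting half the points at once and reorganizing by Abel summation, so that the bivariation hypothesis is only ever used through its uniform bounds --- is the idea your cheapest-first peeling lacks, and without it its convergence has no proof.
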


\begin{remark}\label{tychoices}
Typical candidates for the monotone increasing functions above are $\lambda(u) = u^{\frac{1}{\tilde{p}}
},~\mu(u) = u^{\frac{1}{\tilde{q}}}$, $\rho(u)=u^{\alpha}$, $\sigma(u)=u^{1-\alpha}$ in such way that~(\ref{i}) and~(\ref{s}) hold. In the modern language of rough path theory, assumption~(\ref{s}) precisely says that if $\tilde{p}=\tilde{q}$ then $G$ admits a 2D-control $\omega([x_1,x_2]\times [y_1,y_2]) = |x_1-x_2|^{\frac{1}{\tilde{p}}}|y_1-y_2|^{\frac{1}{\tilde{q}}}$ so that~(\ref{s}) trivially implies that $G\in \mathcal{W}^{\tilde{p}}([a,b]\times [c,d];\mathbb{R})$. See Section 5.5 in~\cite{friz}.
\end{remark}


The following result due to Towghi~\cite{towghi} yields the existence of the Young integral under joint variation assumptions for both integrand and integrator as follows. Next, for the convenience of the reader we present his result as stated in~\cite{friz1}.

\begin{theorem}[Towghi \cite{towghi}]\label{tow}
Let $p,q \ge 1$, assume that $\theta = \frac{1}{p}+\frac{1}{q}> 1$, and consider $F,G:[a,b]\times [c,d]\rightarrow \mathbb{R}$ functions of $p$-variation resp. $q$-variation which do not have common jump points and $F(a,\cdot) = 0,~F(\cdot, c)=0$. Then the 2D Young integral $\int_a^b\int_c^d FdG$ exists (in the Riemann-Stieltjes sense) and for every $\alpha\in (1,\theta)$,

\begin{equation}\label{boundint}
\Bigg| \int_a^b\int_c^d FdG\Bigg|\le \Bigg[\Bigg(1+\zeta \left(\frac{\theta}{\alpha}\right)\Bigg)^\alpha \zeta(\alpha) +(1+\zeta(\theta))\Bigg] V_p(F) V_q(G),
\end{equation}
where $\zeta(s) = \sum_{n=1}^\infty \frac{1}{n^s}.$
\end{theorem}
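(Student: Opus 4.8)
The plan is to realise $\int_a^b\int_c^d F\,dG$ as the limit of the Riemann--Stieltjes sums $S(\Pi,\Pi')=\sum_{i,j}F(x_{i-1},y_{j-1})\,\Delta_i\Delta_j G(x_i,y_j)$ attached to the germ $\Xi_{R}:=F(s,u)\,\Delta_R G$ over a cell $R=[s,t]\times[u,v]$, and then to control how $S(\Pi,\Pi')$ changes when the grid is coarsened down to the trivial grid $\{a,b\}\times\{c,d\}$. A first observation is that on this trivial grid the germ equals $F(a,c)\,\Delta_{[a,b]\times[c,d]}G=0$, because $F(\cdot,c)=F(a,\cdot)=0$; hence the whole value of the integral is accumulated from the coarsening defects, which explains why the final bound is a pure product $V_p(F)V_q(G)$ with no boundary term. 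I would introduce the two superadditive rectangle controls $\omega_F(R):=V_p(F;R)^p$ and $\omega_G(R):=V_q(G;R)^q$ (the $p$- resp. $q$-variations of $F,G$ restricted to $R$), so that rectangular increments of $F$ and $G$ over $R$ are bounded by $\omega_F(R)^{1/p}$ and $\omega_G(R)^{1/q}$, respectively.

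The elementary building block is the defect created by removing one interior grid line. Deleting the abscissa $x_k$ (keeping $\Pi'$ fixed) merges two columns, and by additivity of the rectangular increment of $G$ in the $x$-variable together with the lower-left corner-tag convention one obtains the exact identity $S-S'=\sum_{j}\big(F(x_k,y_{j-1})-F(x_{k-1},y_{j-1})\big)\,\Delta^{+}_k\Delta_j G$, where $\Delta^{+}_k$ denotes the increment over $[x_k,x_{k+1}]$. Using $F(\cdot,c)=0$ I would rewrite each factor $F(x_k,y_{j-1})-F(x_{k-1},y_{j-1})$ as the rectangular increment of $F$ over $[x_{k-1},x_k]\times[c,y_{j-1}]$, so that the whole defect becomes a bilinear expression in rectangular increments of $F$ and of $G$, each controlled by $\omega_F^{1/p}$ and $\omega_G^{1/q}$. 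The symmetric statement holds for the removal of an ordinate $y_\ell$, this time invoking $F(a,\cdot)=0$.

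The bound \eqref{boundint} then comes from a two-scale greedy coarsening. First reduce the $x$-partition. At a stage with $m$ columns the defect of deleting $x_k$ is the transverse sum above; regarding $j\mapsto F(\,\cdot\,,y_{j-1})$-increments as a $p$-variation sequence and the partial sums of $\Delta^{+}_k\Delta_j G$ as a $q$-variation path, this transverse sum is itself a one-dimensional Young--Stieltjes sum, which I would estimate by the scalar Young inequality \eqref{in1} carried out with the regularity budget $\theta/\alpha>1$, producing the factor $\big(1+\zeta(\theta/\alpha)\big)$. A pigeonhole argument applied to the $\tfrac{1}{\alpha}$-th powers of the $m-1$ candidate column-defects then selects a column whose removal costs at most $(m-1)^{-\alpha}$ times the total budget; summing over $m$ yields $\zeta(\alpha)$, while the passage through the exponent $\tfrac{1}{\alpha}$ and back raises the inner factor to the power $\alpha$, giving the term $\big(1+\zeta(\theta/\alpha)\big)^{\alpha}\zeta(\alpha)$. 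Once the $x$-grid has collapsed to $\{a,b\}$ the problem is genuinely one-dimensional in $y$ along the single band $[a,b]$, and a last application of \eqref{in1} with the full exponent $\theta$ contributes $\big(1+\zeta(\theta)\big)$; collecting the two contributions reproduces the bracketed constant, and both reductions require $\alpha>1$ and $\theta/\alpha>1$, i.e. $\alpha\in(1,\theta)$.

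For existence it suffices to run the same defect estimates in reverse: inserting points into a grid changes $S$ by a finite sum of coarsening defects, each bounded by the $\omega_F,\omega_G$ quantities on ever finer rectangles, so the net of Riemann sums is Cauchy as the partitions are refined; the hypothesis that $F$ and $G$ share no jump points is precisely what guarantees that the choice of tag inside each cell becomes immaterial in the limit, so that the limit is well defined and the a priori bound \eqref{boundint} passes to it. (Existence alone could also be deduced from Theorem~\ref{tyoung} after reparametrising $G$ by its own control, but that route hides the explicit constant.) I expect the main obstacle to be exactly the middle step, namely making the two nested greedy arguments interact cleanly: one must show that the transverse one-dimensional estimate can be performed with a fractional regularity budget $\theta/\alpha$ while leaving the outer pigeonhole a summable exponent $\alpha>1$ to spend. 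This splitting of the total regularity $\theta=\tfrac1p+\tfrac1q$ into an inner part $\theta/\alpha$ and an outer part $\alpha$, both strictly above $1$, is the analytic heart of the estimate and is what forces the admissible range $\alpha\in(1,\theta)$; the remaining constant bookkeeping is then routine.
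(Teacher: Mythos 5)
A preliminary remark: the paper does not prove this statement at all --- Theorem~\ref{tow} is quoted from Towghi's paper, in the form given in the Friz--Victoir note cited as [friz1], purely as background for comparison with Theorem~\ref{mainTh}. So there is no internal proof to compare against, and your attempt must be judged on its own terms. Its architecture (corner-tagged Riemann sums, the exact one-line-removal defect identity, an inner 1D Young estimate run at regularity $\theta/\alpha$, an outer pigeonhole at exponent $\alpha$, a final 1D step worth $1+\zeta(\theta)$) is indeed the skeleton of the known proofs, and your defect identity and the use of $F(\cdot,c)=0$, $F(a,\cdot)=0$ to kill the inner boundary terms are correct.

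The genuine gap is the premise on which all the bookkeeping rests: you declare $\omega_F(R):=V_p(F;R)^p$ and $\omega_G(R):=V_q(G;R)^q$ to be \emph{superadditive} rectangle controls. For the grid-type joint variation of Definition~\ref{jointdef} --- which is the $V_p$ appearing in the statement --- this is false for $p>1$. A concrete counterexample: on a $2\times 2$ grid let the four cell increments of $F$ be
\[
\Delta_1\Delta_1F=\Delta_1\Delta_2F=\Delta_2\Delta_1F=1,\qquad \Delta_2\Delta_2F=-1,
\]
and split the square by the vertical mid-line into strips $R_1,R_2$. With $p=2$ one finds $V_2(F;R_1)^2=4$ (coarsening the $y$-direction pays), $V_2(F;R_2)^2=2$, yet $V_2(F;R)^2=4$; hence $V_2(F;R_1)^2+V_2(F;R_2)^2=6>V_2(F;R)^2$. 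Your greedy coarsening draws at every stage on a conserved budget of the form $\sum_k\omega_F(\mathrm{strip}_k)\le\omega_F(\mathrm{whole})$; once superadditivity fails, there is no conserved budget, the selected column need not cost $(m-1)^{-\alpha}$ times anything global, and the $\zeta(\alpha)$ summation collapses. This failure of 2D grid variation to be a control is precisely the point emphasized in [friz1], and working around it (measuring the inner variations along the fixed grid and paying the exponent loss that produces $(1+\zeta(\theta/\alpha))^{\alpha}\zeta(\alpha)$, or passing to genuinely superadditive rectangle-based variations at the cost of enlarging $p$) is the actual content of Towghi's and Friz--Victoir's arguments --- it is the step you yourself defer as ``the analytic heart,'' so what is missing is not routine bookkeeping but the theorem's main difficulty. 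Two smaller points: the H\"older step needed to convert $\sum_k(\mathrm{defect}_k)^{1/\alpha}$ into a product of marginal budgets requires an exponent relation you never verify (done naively it forces $\alpha\le\sqrt{\theta}$ rather than all of $\alpha\in(1,\theta)$); and your observation that the trivial-grid germ vanishes, ``hence no boundary term,'' is inconsistent with your later final 1D application worth $1+\zeta(\theta)$ --- with lower-left tags and $F(a,\cdot)=0$ that last sum is identically zero, whereas the Moore--Pollard definition used in the paper requires arbitrary tags, whose reduction to corner tags is an additional estimate you do not address.
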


By comparing Theorems~\ref{tow} and~\ref{tyoung}, we notice that the price we pay when dealing with $(p,q)$-bivariation is the stronger assumption~(\ref{s}) which provides the necessary smoothness on $G$ in order to get the existence of 2D Young integral. In one hand, one should notice that the mere finiteness of $V_q(G)<\infty$ does not imply~(\ref{s}) with $\lambda(u)= \mu(u)= u^{\frac{1}{p}}$. On the other hand, when $G$ satisfies~(\ref{s}) then we shall relax the joint variation property in $F$ by requiring only finite bivariation of a suitable order. Therefore, it is natural to ask a maximal inequality for the 2D Young integral under assumptions in Theorem~\ref{tyoung}. This issue is particularly important in the theory of local-times of Brownian motion. See Section~\ref{applt} for further details.

\subsection{Main Result}
In this note, our goal is to establish a maximal inequality for the 2D Young integral in terms of $(p,q)$-bivariations rather than the joint variation notion of Def~\ref{jointdef} and Theorem~\ref{tow}. We explore the $(p,q)$-bivariation notion pioneered by L.C Young instead of the joint variation in order to obtain the following maximal inequality for the 2D Young integral.



\begin{theorem}\label{mainTh}
Under the assumptions of Theorem~\ref{tyoung}, the following estimate holds

\begin{eqnarray}
\left|\int_{a}^{b}\int_{c}^{d} F(x,y)d_{x,y}G(x,y)-F(b,d)(G(b,d)-G(b,c)-G(a,d)+G(a,c))\right|\\\nonumber
 \leq K \left(\sum_{m=1}^\infty \rho\left(\frac{\|F\|_{1;p}}{m^{1/p}}\right)\lambda\left(\frac{4}{m}\right)\right)\left(\sum_{m'=1}^\infty\sigma\left(\frac{\|F\|_{2;q}}{{m'}^{1/q}}\right)\mu\left(\frac{4}{m'}\right)\right)\\\nonumber
 +K_1\mu(d-c) \sum_{m=1}^\infty\frac{\|F\|_{1;p}}{m^{1/p}}\lambda\left(\frac{4}{m}\right)+ K_2\lambda(b-a) \sum_{m'=1}^\infty\frac{\|F\|_{2;q}}{{m'}^{1/q}}\mu\left(\frac{4}{m'}\right).\label{mainbound}
\end{eqnarray}
where $K,K_1$ and $K_2$ are absolute constants.
\end{theorem}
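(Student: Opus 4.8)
The plan is to follow L.C. Young's successive point-removal scheme, now carried out in both coordinates, and to feed the product structure of the control~(\ref{s}) into the factorization $\rho\sigma=\mathrm{id}$. First I would reduce to Riemann--Stieltjes sums: by Theorem~\ref{tyoung} the integral exists and is the limit, along tagged partitions refining the finite sets $E,E'$, of the sums $S_{\Pi(\xi),\Pi'(\eta)}=\sum_{i,j}F(\xi_i,\eta_j)\,\Delta_i\Delta_j G(x_i,y_j)$, so it suffices to bound $|S_{\Pi(\xi),\Pi'(\eta)}-L|$ uniformly, where $L=F(b,d)\,(G(b,d)-G(b,c)-G(a,d)+G(a,c))$ is the term subtracted on the left-hand side, and then to pass to the limit. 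The elementary building block is the effect of deleting one interior node. Deleting an interior $x$-node $x_k$ and assigning the tag $\xi_{k+1}$ to the merged interval changes $S$ by exactly $\sum_j(F(\xi_k,\eta_j)-F(\xi_{k+1},\eta_j))\,\Delta_k\Delta_j G$, and symmetrically in $y$; performing a deletion in $x$ inside a deletion in $y$ produces the mixed second difference $\Delta^x_k\Delta^y_l F:=F(\xi_k,\eta_l)-F(\xi_{k+1},\eta_l)-F(\xi_k,\eta_{l+1})+F(\xi_{k+1},\eta_{l+1})$ weighted against $\Delta_k\Delta_j G$, which by~(\ref{s}) is at most $|\Delta^x_k\Delta^y_l F|\,\lambda(\Delta x_k)\mu(\Delta y_l)$.

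The engine is a joint pigeonhole governing which node to delete. With $m$ interior $x$-nodes present, the tags being increasing gives, for every fixed $y$-step $l$, the bound $\sum_k|\Delta^x_k\Delta^y_l F|^p\le \|F(\cdot,\eta_l)-F(\cdot,\eta_{l+1})\|_{[a,b],p}^p\le\|F\|_{1;p}^p$ (Definition~\ref{pq}), so some node satisfies $|\Delta^x_k\Delta^y_l F|\le \|F\|_{1;p}/m^{1/p}$; symmetrically $\sum_l|\Delta^x_k\Delta^y_l F|^q\le\|F\|_{2;q}^q$ uniformly in the $x$-tags, giving $|\Delta^x_k\Delta^y_l F|\le\|F\|_{2;q}/m'^{1/q}$ for a suitable $y$-node. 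Restricting first to the half of the nodes of below-average variation and then taking the smallest gap among those (using $\sum_k\Delta x_k=b-a$, $\sum_l\Delta y_l=d-c$ and monotonicity of $\lambda,\mu$) yields a single node that is simultaneously cheap in variation and in length, which is the source of the arguments $\lambda(4/m)$ and $\mu(4/m')$.

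The decisive device is then $\rho\sigma=\mathrm{id}$. On a genuinely two-dimensional (corner) deletion I would write $|\Delta^x_k\Delta^y_l F|=\rho\!\left(|\Delta^x_k\Delta^y_l F|\right)\sigma\!\left(|\Delta^x_k\Delta^y_l F|\right)$ and bound the two monotone factors by the $x$- and $y$-pigeonhole estimates above, so that the corner cost is at most a constant multiple of $\rho(\|F\|_{1;p}/m^{1/p})\,\sigma(\|F\|_{2;q}/m'^{1/q})\,\lambda(4/m)\,\mu(4/m')$; the $\rho$-factor is carried by the $x$-reduction round and the conjugate $\sigma$-factor by the $y$-reduction round. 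Summing over the two counters $m,m'$ produces the product $\big(\sum_m\rho(\|F\|_{1;p}/m^{1/p})\lambda(4/m)\big)\big(\sum_{m'}\sigma(\|F\|_{2;q}/m'^{1/q})\mu(4/m')\big)$, finite by~(\ref{i}). Deletions along the edges $x=b$ or $y=d$, where (using $F(\cdot,c)=F(a,\cdot)=0$) only a first difference of $F$ survives against a full transversal increment of $G$, require no splitting: their cost is linear in the $F$-increment, producing the two boundary terms with factors $\mu(d-c)$ and $\lambda(b-a)$. After all interior nodes are removed the only survivor is $F(b,d)$ times the full rectangle increment of $G$, i.e. exactly $L$, and letting the partitions refine $E,E'$ delivers the claimed bound~(\ref{mainbound}) with absolute constants $K,K_1,K_2$.

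The main obstacle is orchestrating the interleaved reduction so that one and the same corner term receives both pigeonhole bounds and, above all, so that the outer aggregation collapses to $\sum_{m'}\sigma(\|F\|_{2;q}/m'^{1/q})\mu(4/m')$ rather than to the uncontrolled quantity $\sum_l\mu(\Delta y_l)$ that a naive iterated reduction (reduce $x$ fully, then $y$) would leave behind. The factorization $\rho\sigma=\mathrm{id}$ is precisely the mechanism that prevents this degeneration, by letting the conjugate factor of the mixed difference in $x$ be absorbed into the $y$-pigeonhole weight $\mu(\Delta y_l)$. A secondary delicate point is the clean separation of corner errors (which alone require the split and supply the product term) from edge errors (which supply the two linear boundary terms), together with checking that every estimate is uniform in the partition, so that the passage to the Moore--Pollard limit is legitimate.
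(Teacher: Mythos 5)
Your high-level architecture does mirror the paper's: telescope the step-function sum down to the trivial grid $\{a,b\}\times\{c,d\}$ (whose sum is the subtracted term), split the error into corner and edge contributions, and pass to the Moore--Pollard limit. But the engine you propose --- a joint per-term pigeonhole driving a node-by-node deletion --- has a genuine gap, sitting exactly at what you yourself flag as the ``main obstacle'', and the factorization $\rho\sigma=\mathrm{id}$ does not repair it. For the double telescoping $S_{M,M'}-S_{M,0}-S_{0,M'}+S_{0,0}=\sum_{m,m'}\Delta_1\Delta_2S_{m,m'}$ to be an \emph{identity}, the two chains of partitions must be nested and chosen independently of one another: the corner term indexed $(m,m')$ pairs the $x$-node removed at stage $m$ with the $y$-node removed at stage $m'$, for \emph{every} $m'$ simultaneously. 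Your pigeonhole chooses the $x$-node adaptively from the current $y$-data: for one fixed $y$-interval at least a constant fraction of the $m$ candidate nodes satisfy $|\Delta^x_k\Delta^y_l F|\le C\|F\|_{1;p}/m^{1/p}$, but the exceptional set depends on $l$, and row $m$ of the telescoping involves all $M'$ of the $y$-intervals, so the union of the exceptional sets can exhaust every node; no single choice is good for all stages $m'$ at once. If instead you delete genuinely one node at a time (so that each deletion acts on an actual sum), the cost of one $x$-deletion is not a single corner term but a whole column of roughly $m'$ of them, and any per-term bound then acquires a cardinality factor (of order $m'$, or $\sum_l\mu(\Delta y_l)$ after Abel summation) that is not present in the right-hand side of the theorem; under the hypotheses of Theorem~\ref{tyoung} only the two series in~(\ref{i}) are assumed finite, and quantities like $\sum_m m\,\rho\big(\|F\|_{1;p}/m^{1/p}\big)\sigma\big(\|F\|_{2;q}/m^{1/q}\big)\lambda(C/m)$ can diverge in regimes the theorem covers (e.g.\ the local-time exponents $p=1$, $q=2+\delta$).

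The idea your proposal is missing is the paper's replacement of \emph{selection} by \emph{averaging} over pre-chosen dyadic blocks. The paper constructs the nested dyadic partitions $\Pi_k$, $\Pi'_{k'}$ in advance (each with $2^k+1$ points, mesh $<4\cdot2^{-k}$, inserting points of the target partition), so the double telescoping is legitimate; by Lemma~\ref{d1d2S} the stage-$(k,k')$ contribution is a block of about $2^{k+k'}$ corner terms, and the key Lemma~\ref{boundd1d2f} bounds the \emph{aggregate} $\sum_{i,j}|\Delta_i\Delta_jF|$ over the block in two ways at once --- by $4\cdot2^{k+k'}\|F\|_{1;p}2^{-k/p}$ and by $4\cdot2^{k+k'}\|F\|_{2;q}2^{-k'/q}$, via the triangle inequality plus Jensen, with no choice of node made anywhere --- so that Lemma~\ref{boundsigma} applies the $\rho$--$\sigma$ split to the aggregate itself. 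Summing blocks with the elementary inequality $\sum_k2^{k-1}f(2^{-k})\le\sum_mf(1/m)$ yields the product of the two series, and Lemma~\ref{desigmarginal} supplies the two edge terms. So your instinct about where the difficulty lies, and about the roles of~(\ref{s}) and of $\rho\sigma=\mathrm{id}$, is correct, but the mechanism that actually defeats the difficulty is the block/averaging structure, not the per-term factorization. (A smaller point: carrying tags through the deletions leaves $F(\xi_N,\eta_{N'})$, not $F(b,d)$, in front of the rectangular increment of $G$; as in the paper, one should evaluate the step function at right endpoints of a refining grid so that the trivial-partition sum is exactly $F(b,d)(G(b,d)-G(a,d)-G(b,c)+G(a,c))$.)
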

The first term on the right-hand side of the above inequality can be seen as a mixture of the $(p,q)$-bivariations, whereas the other terms are
purely marginal terms. One observes that the joint $p$-variation is replaced by an equilibrium of the marginal $(p,q)$-bivariations, with the equilibrium
being given by the functions $\rho$ and $\sigma$. This relaxation is compensated by controlling the paths of $G$ by means of assumption~(\ref{s}).


In most applications of Theorem \ref{mainTh}, the statement can be simplified. In fact, as indicated in Remark~\ref{tychoices}, the typical candidates for the functions $\rho(u)$ and $\sigma(u)$ are given by $\rho(u) = u^\alpha$, $\sigma(u) = u^{1-\alpha}$. Furthermore, the functions $\lambda(u)$ and $\mu(u)$ are usually given by $\lambda(u) = u^{1/\tilde{p}}$, and $\mu(u) = u^{1/\tilde{q}}$, with $\tilde{p},\tilde{q}>1$. In this case, we have the following corollary.

\begin{corollary}\label{mainbound}
Let $F,G:[a,b]\times [c,d]\rightarrow \mathbb{R}$ be two functions, where $F$ vanishes on the lines $x=a$ and $y=c$ and has
bounded $(p,q)$-bivariation, and $G$ satisfies $|\Delta_i\Delta_jG(x_i,y_j)|\leq C|x_i-x_{i-1}|^{1/\tilde{p}}|y_j-y_{j-1}|^{1/\tilde{q}}$, for
some constant $C>0$, and $\tilde{p},\tilde{q}>1$. If there exists $\alpha\in (0,1)$ such that
$$\alpha/p + 1/\tilde{p}>1\quad \hbox{and}\quad(1-\alpha)/q + 1/\tilde{q}>1,$$
then, the 2D Young integral $\int_a^b\int_c^d F(x,y)d_{x,y}G(x,y)$ exists and the following estimate
holds
\begin{eqnarray*}
&&\left|\int_{a}^{b}\int_{c}^{d} F(x,y)d_{x,y}G(x,y)-F(b,d)(G(b,d)-G(b,c)-G(a,d)+G(a,c))\right|\\
 &\leq& K(\alpha,p,\tilde{p},q,\tilde{q}) \|F\|_{1;p}^{\alpha}\|F\|_{2;q}^{1-\alpha}+ K_1(p,\tilde{p},\tilde{q})\|F\|_{1;p}+ K_2(q,\tilde{q},\tilde{p})\|F\|_{2;q},
 \end{eqnarray*}
 where
 $$K(\alpha,p,\tilde{p},q,\tilde{q}) = K 4^{1/\tilde{p}}\zeta\left(\frac{\alpha}{p}+\frac{1}{\tilde{p}}\right)\zeta\left(\frac{1-\alpha}{q} +\frac{1}{\tilde{q}}\right),$$
 $$K_1(p,\tilde{p},\tilde{q}) = K_14^{1/\tilde{p}}(d-c)^{1/\tilde{q}}\zeta\left(\frac{1}{p}+\frac{1}{\tilde{p}}\right),$$
$$K_2(q,\tilde{q},\tilde{p}) = K_24^{1/\tilde{q}}(b-a)^{1/\tilde{p}} \zeta\left(\frac{1}{q}+\frac{1}{\tilde{q}}\right),$$
and $\zeta(s) = \sum_{i=1}^\infty \frac{1}{n^s}.$
\end{corollary}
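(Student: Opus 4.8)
The plan is to obtain Corollary~\ref{mainbound} as a direct specialization of Theorem~\ref{mainTh}, so that the only real work is to verify the hypotheses for the power-function choices and then to evaluate the three series appearing in~(\ref{mainbound}). First I would check that $\rho(u)=u^\alpha$, $\sigma(u)=u^{1-\alpha}$, $\lambda(u)=u^{1/\tilde p}$ and $\mu(u)=u^{1/\tilde q}$ meet all the requirements of Theorem~\ref{tyoung}: all four are monotone increasing for $\alpha\in(0,1)$ and $\tilde p,\tilde q>0$, and $\rho(u)\sigma(u)=u^\alpha u^{1-\alpha}=u$. The constant $C$ in the bound on $\Delta_i\Delta_j G$ is harmless: replacing $\lambda$ by $Cu^{1/\tilde p}$ keeps $\lambda$ monotone increasing and leaves $\rho\sigma=u$ intact, so~(\ref{s}) holds and $C$ is simply carried as a multiplicative factor into the absolute constants. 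The decisive point is the summability condition~(\ref{i}): with these choices $\rho(n^{-1/p})\lambda(n^{-1})=n^{-(\alpha/p+1/\tilde p)}$ and $\sigma(n^{-1/q})\mu(n^{-1})=n^{-((1-\alpha)/q+1/\tilde q)}$, so the two series in~(\ref{i}) are exactly $\zeta(\alpha/p+1/\tilde p)$ and $\zeta((1-\alpha)/q+1/\tilde q)$, which converge precisely under the two stated hypotheses $\alpha/p+1/\tilde p>1$ and $(1-\alpha)/q+1/\tilde q>1$. This simultaneously yields existence of the integral through Theorem~\ref{tyoung} and the maximal bound through Theorem~\ref{mainTh}.

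Next I would substitute the power functions into~(\ref{mainbound}) and read off each series as a Riemann zeta value. The mixed term factors across the two variables: $\sum_{m}\rho(\|F\|_{1;p}m^{-1/p})\lambda(4/m)=\|F\|_{1;p}^{\alpha}4^{1/\tilde p}\zeta(\alpha/p+1/\tilde p)$ and, similarly, $\sum_{m'}\sigma(\|F\|_{2;q}{m'}^{-1/q})\mu(4/m')=\|F\|_{2;q}^{1-\alpha}4^{1/\tilde q}\zeta((1-\alpha)/q+1/\tilde q)$, whose product yields, up to an absolute constant, the term $K(\alpha,p,\tilde p,q,\tilde q)\|F\|_{1;p}^{\alpha}\|F\|_{2;q}^{1-\alpha}$. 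The two purely marginal sums are evaluated in the same way: $\mu(d-c)\sum_m \|F\|_{1;p}m^{-1/p}\lambda(4/m)=4^{1/\tilde p}(d-c)^{1/\tilde q}\|F\|_{1;p}\zeta(1/p+1/\tilde p)$ and $\lambda(b-a)\sum_{m'}\|F\|_{2;q}{m'}^{-1/q}\mu(4/m')=4^{1/\tilde q}(b-a)^{1/\tilde p}\|F\|_{2;q}\zeta(1/q+1/\tilde q)$, which are precisely the marginal terms $K_1(p,\tilde p,\tilde q)\|F\|_{1;p}$ and $K_2(q,\tilde q,\tilde p)\|F\|_{2;q}$.

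The only thing that still needs justification is the finiteness of the two marginal zeta values, and here I would use $\alpha\in(0,1)$: since $1/p\ge\alpha/p$ one has $1/p+1/\tilde p\ge\alpha/p+1/\tilde p>1$, and since $1/q\ge(1-\alpha)/q$ one has $1/q+1/\tilde q\ge(1-\alpha)/q+1/\tilde q>1$, so both $\zeta(1/p+1/\tilde p)$ and $\zeta(1/q+1/\tilde q)$ are finite. Collecting the numerical factors $4^{1/\tilde p}$, $4^{1/\tilde q}$, the interval lengths and the zeta values into the absolute constants $K,K_1,K_2$ of Theorem~\ref{mainTh} reproduces the claimed $K(\alpha,p,\tilde p,q,\tilde q)$, $K_1(p,\tilde p,\tilde q)$ and $K_2(q,\tilde q,\tilde p)$. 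I do not expect a genuine obstacle: the corollary is a routine specialization, and the only point demanding a little care is the exponent bookkeeping together with the observation that the marginal series converge for free from $\alpha\in(0,1)$ once~(\ref{i}) is assumed for the mixed term.
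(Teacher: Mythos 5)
Your proposal is correct and takes essentially the same route as the paper: the corollary is intended there as a direct specialization of Theorem~\ref{mainTh} (and Theorem~\ref{tyoung}) with $\rho(u)=u^{\alpha}$, $\sigma(u)=u^{1-\alpha}$, $\lambda(u)=u^{1/\tilde{p}}$, $\mu(u)=u^{1/\tilde{q}}$, for which the paper offers no separate argument, and your verification of~(\ref{i}) together with the evaluation of the three series as zeta values is exactly that specialization. Your additional bookkeeping --- absorbing $C$ and the factor $4^{1/\tilde{q}}\le 4$ into the absolute constants, and observing that the marginal series converge for free since $1/p+1/\tilde{p}\ge \alpha/p+1/\tilde{p}>1$ and $1/q+1/\tilde{q}\ge(1-\alpha)/q+1/\tilde{q}>1$ --- is sound and, if anything, slightly more careful than the paper's statement.
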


The importance of Theorem~\ref{mainTh} and~Corollary~\ref{mainbound} lies in cases when $F$ lacks or it is hard to check joint variation but $G$ satisfies condition~(\ref{s}). This type of regularity naturally arises in the context of functional It\^o formulas~(see e.g~\cite{LOS}). See Section~\ref{applt} for some examples related to space-time local-time integral in the Brownian motion setting.

\section{Proof of Theorem~\ref{mainTh}}
Throughout this section, we are going to fix a function $G:[a,b]\times[c,d]\rightarrow \mathbb{R}$ such that $(x,y)\mapsto G(x,y)-G(\alpha,y) -G(x,\beta)+G(\alpha,\beta)$ admits only points of discontinuity of first kind for any $\alpha\in [a,b],~\beta\in [c,d]$. Let $[a,b]\subset\mathbb{R}$, and let $\Pi(\xi)$ be a fixed tagged partition of $[a,b]$. Denote the points of the partition $\Pi(\xi)$ by $x_0,\ldots,x_N$. We will now obtain a new sequence of partitions $\Pi_0,\ldots,\Pi_M$, where $M$ is chosen in such a way that $\Pi_M$ refines $\{x_0,\ldots,x_N\}$. Each partition $\Pi_j$ is chosen such that $\#\Pi_j = 2^j +1; j\ge 1$ and they are constructed inductively. Let $\Pi_0 = \{a,b\}$. Suppose $\Pi_j = \{t_0^{(j)},\ldots,t_{2^j}^{(j)}\}$,
and $|t_{k+1}^{(j)} - t_k^{(j)}|<4\cdot2^{-j}$, for $k=0,\ldots, 2^j-1$. Then, let for each $k$,  $t^{(j+1)}_{2k} = t_k^{(j)}$,
and $t_{2k+1}^{(j+1)}$ be any
number in $\{x_0,\ldots,x_N\}\cap (t_{k}^{(j)},t_{k+1}^{(j)})$ such that
\begin{equation}\label{cond1}
|t_{2k+1}^{(j+1)} - t_k^{(j)}|<4\cdot2^{-(j+1)}\qquad\hbox{and}\qquad |t_{k+1}^{(j)}-t_{2k+1}^{(j+1)}|<4\cdot 2^{-(j+1)}.
\end{equation}
If there is no such element we take $t_{2k+1}^{(j+1)}:= \frac{t_{k+1}^{(j)} + t_k^{(j)}}{2}$, which obviously satisfies \eqref{cond1}.

It is clear that, since $\#\Pi_M = 2^M+1$, the mesh $\|\Pi_M\|<2^{-(M-2)}$, and $\Pi(\xi)$ is finite, there exists some $M<\infty$ such that $\Pi_M$
refines $\{x_0,\ldots,x_N\}$.
Then, we clearly have
$$\int_a^b f_{\Pi(\xi)} dg = \sum_{i=0}^{2^M} f_{\Pi(\xi)}(t_i^{(M)}) (g(t_{i}^{(M)})-g(t_{i-1}^{(M)})).$$

\begin{lemma}\label{onedim}
Let $\{t_i^{(n)}\}$ be the points of the partition $\Pi_n$, then, for functions $f,g:[a,b]\to\mathbb{R}$, let
$$S_n = \sum_{i=1}^{2^n} f(t_i^{(n)}) (g(t_{i}^{(n)})-g(t_{i-1}^{(n)})).$$
Thus,
$$S_n - S_{n-1} = \sum_{j=1}^{2^{n-1}} (f(t_{2j-1}^{(n)})-f(t_{2j}^{(n)}))(g(t_{2j-1}^{(n)})- g(t_{2j-2}^{(n)})).$$
\end{lemma}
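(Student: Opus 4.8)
The plan is a direct algebraic computation that exploits the nesting structure built into the inductive construction, namely the relation $t_{2k}^{(n)} = t_k^{(n-1)}$ coming from the rule $t_{2k}^{(j+1)} = t_k^{(j)}$. The point is that the even-indexed nodes at level $n$ are exactly the nodes at level $n-1$, so both Riemann--Stieltjes sums can be written over the common level-$n$ grid and compared term by term.

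First I would rewrite $S_{n-1}$ using this reindexing. Since $t_k^{(n-1)} = t_{2k}^{(n)}$ for every $k$, one has
$$S_{n-1} = \sum_{k=1}^{2^{n-1}} f(t_{2k}^{(n)})\bigl(g(t_{2k}^{(n)}) - g(t_{2k-2}^{(n)})\bigr).$$
Next I would group the sum defining $S_n$ into consecutive pairs, collecting the $2^n$ summands into $2^{n-1}$ blocks indexed by $j$, where the $j$-th block gathers the terms $i = 2j-1$ and $i = 2j$:
$$S_n = \sum_{j=1}^{2^{n-1}} \Bigl[ f(t_{2j-1}^{(n)})\bigl(g(t_{2j-1}^{(n)}) - g(t_{2j-2}^{(n)})\bigr) + f(t_{2j}^{(n)})\bigl(g(t_{2j}^{(n)}) - g(t_{2j-1}^{(n)})\bigr) \Bigr].$$

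Finally I would subtract the two expressions blockwise. Within each block the two summands carrying the factor $f(t_{2j}^{(n)})$ combine and the increment $g(t_{2j}^{(n)})$ cancels, leaving $-f(t_{2j}^{(n)})\bigl(g(t_{2j-1}^{(n)}) - g(t_{2j-2}^{(n)})\bigr)$; pairing this with the surviving term $f(t_{2j-1}^{(n)})\bigl(g(t_{2j-1}^{(n)}) - g(t_{2j-2}^{(n)})\bigr)$ factors out the common increment and produces exactly $(f(t_{2j-1}^{(n)}) - f(t_{2j}^{(n)}))(g(t_{2j-1}^{(n)}) - g(t_{2j-2}^{(n)}))$. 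Summing over $j$ yields the stated identity. There is no genuine obstacle here beyond careful index bookkeeping; the only step that requires attention is the reindexing $t_k^{(n-1)} = t_{2k}^{(n)}$, which is precisely the content of the construction, so the whole argument is a telescoping-style cancellation within each refined pair of intervals.
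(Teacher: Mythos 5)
Your proof is correct and takes essentially the same route as the paper's: both rewrite $S_{n-1}$ over the level-$n$ grid via the nesting relation $t_k^{(n-1)} = t_{2k}^{(n)}$, split $S_n$ into its odd- and even-indexed terms, and cancel the common $g$-increments. The only cosmetic difference is that the paper expands $g(t_{2i}^{(n)})-g(t_{2i-2}^{(n)})$ into two increments inside $S_{n-1}$ before subtracting, whereas you carry out the identical cancellation blockwise.
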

\begin{proof}
Note that
\begin{eqnarray*}
S_{n-1} &=& \sum_{i=1}^{2^{n-1}} f(t_i^{(n-1)}) (g(t_{i}^{(n-1)})-g(t_{i-1}^{(n-1)}))\\
&=& \sum_{i=1}^{2^{n-1}} f(t_{2i}^{(n)}) (g(t_{2i}^{(n)})-g(t_{2i-2}^{(n)}))\\
&=& \sum_{i=1}^{2^{n-1}} f(t_{2i}^{(n)}) (g(t_{2i}^{(n)})-g(t_{2i-1}^{(n)}))+ \sum_{i=1}^{2^{n-1}} f(t_{2i}^{(n)}) (g(t_{2i-1}^{(n)})-g(t_{2i-2}^{(n)})).
\end{eqnarray*}
Since
\begin{eqnarray*}
S_n &=& \sum_{i=1}^{2^n} f(t_i^{(n)}) (g(t_{i}^{(n)})-g(t_{i-1}^{(n)}))\\
&=& \sum_{i=1}^{2^{n-1}} f(t_{2i}^{(n)}) (g(t_{2i}^{(n)})-g(t_{2i-1}^{(n)}))+ \sum_{i=1}^{2^{n-1}} f(t_{2i-1}^{(n)}) (g(t_{2i-1}^{(n)})-g(t_{2i-2}^{(n)})),
\end{eqnarray*}
we have,
\begin{eqnarray*}
S_n - S_{n-1} &=& \sum_{i=1}^{2^{n-1}} (f(t_{2i-1}^{(n)})-f(t_{2i}^{(n)}))(g(t_{2i-1}^{(n)})-g(t_{2i-2}^{(n)})).
\end{eqnarray*}
\end{proof}

We will now prove a two-parameter version of this lemma. We begin with some definitions. Let $F,G:[a,b]\times [c,d]\to\mathbb{R}$ be two functions, $\Pi(\xi), \Pi'(\eta)$ tagged partitions, together with sequences of partitions $\Pi_0,\ldots,\Pi_M$, $\Pi_0',\ldots,\Pi_{M'}'$, where we denote $\Pi_k = \{t_i^{(k)}\}$ and $\Pi_l'= \{s_j^{(l)}\}$. As before, it is easy to see that
$$\int_a^b \int_c^d F_{\Pi(\xi),\Pi'(\eta)}(x,y)d_{x,y} G(x,y) = \sum_{i=1}^{2^M}\sum_{j=1}^{2^{M'}} F_{\Pi(\xi),\Pi'(\eta)}(t_i^{(M)},s_j^{(M')}) \Delta_i\Delta_j G(t_i^{(M)},s_j^{(M')}),$$
where $M$ and $M'$ are such that $\Pi_M$ and $\Pi_{M'}$ refine $\Pi(\xi)$ and $\Pi'(\eta)$, respectively.

For a two-indexed sequence $S_{n,n'}$, we denote $\Delta_1 S_{n,n'} := S_{n,n'}- S_{n-1,n'}$ and $\Delta_2 S_{n,n'} := S_{n,n'} - S_{n,n'-1}$. Then, we have $S_{n,n'} - S_{n-1,n'}-S_{n,n'-1}+S_{n-1,n'-1} = \Delta_1\Delta_2 S_{n,n'}$.

\begin{lemma}\label{d1d2S}
Let $F,G:[a,b]\times [c,d]\to\mathbb{R}$ be two functions, and $\Pi_0,\ldots,\Pi_M$, $\Pi_0',\ldots,\Pi_{M'}'$ sequences of partitions of $[a,b]$
and $[c,d]$, respectively. Then, if we denote
$$S_{n,n'} = \sum_{i=1}^{2^n}\sum_{j=1}^{2^{n'}} F(t_i^{(n)},s_j^{(n')}) \Delta_i\Delta_j G(t_i^{(n)},s_j^{(n')}),$$
we have that
$$\Delta_1\Delta_2 S_{n,n'} = -\sum_{i=1}^{2^n-1}\sum_{j=1}^{2^{n'}-1} \Delta_i\Delta_jF(t_{2i}^{(n)},s_{2j}^{(n')})\Delta_i\Delta_j G(t_{2i-1}^{(n)},s_{2j-1}^{(n')}).$$
\end{lemma}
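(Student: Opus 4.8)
The plan is to mimic the one-dimensional computation of Lemma~\ref{onedim}, but now applied in both parameters and to the bivariate difference operator $\Delta_i\Delta_j G$. First I would expand $S_{n,n'}$ as a sum over the fine partitions $\Pi_n$ and $\Pi'_{n'}$, and then re-index the coarser sums $S_{n-1,n'}$, $S_{n,n'-1}$ and $S_{n-1,n'-1}$ using the embedding $t^{(k-1)}_i = t^{(k)}_{2i}$ and $s^{(l-1)}_j = s^{(l)}_{2j}$. The key observation is that the $G$-increments telescope: on the coarse grid the increment $\Delta_i G(t^{(n-1)}_i,\cdot) = G(t^{(n)}_{2i},\cdot)-G(t^{(n)}_{2i-2},\cdot)$ splits into the two fine increments across $t^{(n)}_{2i-1}$, and likewise in the $s$-variable. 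So I would first establish the $G$-splitting identities and substitute them everywhere.

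Concretely, I would handle this by iterating the one-parameter argument. Treating the $y$-parameter as frozen, apply the algebra of Lemma~\ref{onedim} in the $x$-variable to write $\Delta_1 S_{n,n'}$ (the $x$-difference) as a single sum $\sum_i \bigl(F(t^{(n)}_{2i-1},\cdot)-F(t^{(n)}_{2i},\cdot)\bigr)\bigl(\text{fine }G\text{-increment across }t^{(n)}_{2i-1}\bigr)$, now still summed against the $y$-partition at level $n'$. Then I would apply the same one-parameter identity in the $y$-variable to this expression to extract $\Delta_2$. The effect of applying the telescoping twice is to replace the plain values $F$ by the doubly-differenced $\Delta_i\Delta_j F$ evaluated at the ``even'' node $(t^{(n)}_{2i},s^{(n')}_{2j})$, and to replace $\Delta_i\Delta_j G$ by its value at the ``odd'' node $(t^{(n)}_{2i-1},s^{(n')}_{2j-1})$.

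The two-dimensional bookkeeping is the main obstacle: one must carefully track four sums and make sure the mixed second differences of $F$ and of $G$ recombine with the correct signs and at the correct nodes. I anticipate that the overall minus sign and the index ranges $1\le i\le 2^n-1$, $1\le j\le 2^{n'}-1$ (rather than up to $2^n$, $2^{n'}$) will be the delicate points, arising from boundary terms that either cancel or are absorbed when the four re-indexed sums are combined. The cleanest route is to first prove an intermediate identity for $\Delta_1 S_{n,n'}$ alone (the exact two-variable analogue of Lemma~\ref{onedim}, with the $y$-sum untouched) and record it as a displayed equation, then differentiate that identity in $n'$ by the same manipulation, so that the double-difference structure appears by symmetry. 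Once the splitting identities are in place, the remaining steps are purely the routine re-indexing and sign-collection, and substituting $F\mapsto\Delta_i\Delta_j F$, $G\mapsto\Delta_i\Delta_j G$ at the stated nodes yields the claimed formula.
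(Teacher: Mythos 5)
Your plan is exactly the paper's proof, up to the immaterial choice of which variable is differenced first: the paper applies Lemma~\ref{onedim} in the $y$-variable (taking $f_i(y)=F(t_i^{(n)},y)$, $g_i(y)=\Delta_i G(t_i^{(n)},y)$) to compute $\Delta_2 S_{n,n'}$, and then applies it once more in the $x$-variable; you propose $x$ first, then $y$, which is the same argument by symmetry. The key mechanism — freeze one variable, recognize the inner sums as one-dimensional sums over the nested partitions, and invoke the telescoping identity of Lemma~\ref{onedim} twice — is correct and complete.

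One warning about the two points you single out as delicate. There are no boundary terms to cancel or absorb: for each fixed $j$ the functions $h_j(x)=F(x,s_{2j-1}^{(n')})-F(x,s_{2j}^{(n')})$ and $\kappa_j(x)=G(x,s_{2j-1}^{(n')})-G(x,s_{2j-2}^{(n')})$ make the second application of Lemma~\ref{onedim} literal, and every term is accounted for by that identity. Executing this faithfully yields
\[
\Delta_1\Delta_2 S_{n,n'}=\sum_{i=1}^{2^{n-1}}\sum_{j=1}^{2^{n'-1}}\Delta_i\Delta_j F\bigl(t_{2i}^{(n)},s_{2j}^{(n')}\bigr)\,\Delta_i\Delta_j G\bigl(t_{2i-1}^{(n)},s_{2j-1}^{(n')}\bigr),
\]
where all differences are over consecutive points of the fine grids: the upper limits are $2^{n-1}$ and $2^{n'-1}$ (half the number of fine intervals, exactly as in Lemma~\ref{onedim}), not $2^n-1$ and $2^{n'}-1$, and the sign is plus. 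The minus sign and the exponents in the displayed statement are typos; note that the final display of the paper's own proof carries no minus sign and writes the $F$-bracket out explicitly, and that bracket equals $+\Delta_i\Delta_j F(t_{2i}^{(n)},s_{2j}^{(n')})$ (one can also confirm the plus sign by direct computation in the case $n=n'=1$ with $F(x,y)=u(x)v(y)$, $G(x,y)=\phi(x)\psi(y)$). So do not try to manufacture the minus sign from boundary contributions — you would be forcing an error. The discrepancy is harmless downstream, since Proposition~\ref{propdesig} only uses $|\Delta_1\Delta_2 S_{k,k'}|$.
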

\begin{proof}
By setting $f_i(y) = F(t_i^{(n)},y)$ and $g_i(y) = \Delta_i G(t_i^{(n)},y)$  in Lemma \ref{onedim}, it is easy to see that for each $i$,
\begin{eqnarray*}
\sum_{j=1}^{2^{n'}} F(t_i^{(n)},s_j^{(n')}) \Delta_i\Delta_j G(t_i^{(n)},s_j^{(n')}) - \sum_{j=1}^{2^{n'-1}} F(t_i^{(n)},s_j^{(n'-1)}) \Delta_i\Delta_j G(t_i^{(n)},s_j^{(n'-1)})\\
= \sum_{j=1}^{2^{n'-1}} (F(t_i^{(n)},s_{2j-1}^{(n')})-F(t_i^{(n)},s_{2j}^{(n')}))(\Delta_i G(t_i^{(n)},s_{2j-1}^{(n')}) - \Delta_i G(t_i^{(n)},s_{2j-2}^{(n')})).
\end{eqnarray*}
Thus, we have that
\begin{eqnarray*}
\Delta_2 S_{n,n'} &=& S_{n,n'}-S_{n,n'-1}\\
&=& \sum_{i=1}^{2^n}\sum_{j=1}^{2^{n'-1}} (F(t_i^{(n)},s_{2j-1}^{(n')})-F(t_i^{(n)},s_{2j}^{(n')}))(\Delta_i G(t_i^{(n)},s_{2j-1}^{(n')}) - \Delta_i G(t_i^{(n)},s_{2j-2}^{(n')})).
\end{eqnarray*}

Applying Lemma \ref{onedim} again, we can proceed in a similar manner to obtain the desired result:
$$\Delta_1\Delta_2 S_{n,n'} =$$
$$\sum_{i=1}^{2^n-1}\sum_{j=1}^{2^{n'}-1} (F(t_{2i-1}^{(n)},s_{2j-1}^{(n')})-F(t_{2i-1}^{(n)},s_{2j}^{(n')})-F(t_{2i}^{(n)},s_{2j-1}^{(n')})+F(t_{2i}^{(n)},s_{2j}^{(n')}))\Delta_i\Delta_j G(t_{2i-1}^{(n)},s_{2j-1}^{(n')}).$$
\end{proof}

\noindent We recall the following elementary remark for reader's convenience.
\begin{lemma}\label{boundsigma}
Let $A,B,C\geq 0$, and let $\alpha>0$. Let $\rho,\sigma:[0,\infty)\to [0,\infty)$ be two non-decreasing functions such that $\rho(u)\sigma(u) = u$. Then,
$$A\leq \alpha B\quad\hbox{and}\quad A\leq \alpha C \Rightarrow A \leq \alpha \rho(B)\sigma(C).$$
\end{lemma}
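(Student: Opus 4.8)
The plan is to prove the elementary Lemma \ref{boundsigma} by a short case analysis that exploits the fact that $\rho$ and $\sigma$ are non-decreasing and satisfy the factorization $\rho(u)\sigma(u)=u$. The hypothesis gives us two upper bounds on $A$, namely $A\le\alpha B$ and $A\le\alpha C$, and the goal is the combined bound $A\le\alpha\rho(B)\sigma(C)$. The natural idea is that the product $\rho(B)\sigma(C)$ interpolates between $B=\rho(B)\sigma(B)$ and $C=\rho(C)\sigma(C)$, so I would like to locate $\rho(B)\sigma(C)$ relative to the minimum of $B$ and $C$.

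First I would observe that the whole statement is trivial when $A=0$, so assume $A>0$, and hence $B>0$ and $C>0$. The key step is to split into the two cases $B\le C$ and $B\ge C$. Suppose $B\le C$; then since $\sigma$ is non-decreasing we have $\sigma(B)\le\sigma(C)$, and therefore
\[
\rho(B)\sigma(C)\ge \rho(B)\sigma(B)=B\ge \frac{A}{\alpha},
\]
where the last inequality is exactly the hypothesis $A\le\alpha B$. Rearranging gives $A\le\alpha\rho(B)\sigma(C)$, as desired. In the symmetric case $B\ge C$, I would instead use that $\rho$ is non-decreasing so $\rho(C)\le\rho(B)$, whence
\[
\rho(B)\sigma(C)\ge \rho(C)\sigma(C)=C\ge \frac{A}{\alpha},
\]
this time invoking the hypothesis $A\le\alpha C$, and again rearranging yields the claim. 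Since one of the two cases $B\le C$ or $B\ge C$ always holds, the bound $A\le\alpha\rho(B)\sigma(C)$ follows in every case.

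There is essentially no main obstacle here; the only point requiring a little care is that the argument uses the factorization $\rho(u)\sigma(u)=u$ applied at the \emph{smaller} of the two arguments, so that monotonicity pushes the product $\rho(B)\sigma(C)$ up rather than down. One should also note for rigor that $\rho,\sigma$ map into $[0,\infty)$, so all quantities appearing are non-negative and the inequalities are preserved under the multiplications performed; the non-negativity of $A,B,C$ stated in the hypotheses guarantees this. No continuity or strict monotonicity of $\rho$ and $\sigma$ is needed, only that each is non-decreasing together with the identity $\rho(u)\sigma(u)=u$.
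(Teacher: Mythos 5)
Your proof is correct. Note that the paper itself states Lemma \ref{boundsigma} as an ``elementary remark'' and offers no proof at all, so your argument simply fills in the omitted details; the case split on $B\le C$ versus $B\ge C$, applying the factorization $\rho(u)\sigma(u)=u$ at the smaller argument and using monotonicity of the other factor, is exactly the natural (and essentially the only) way to do it, and even the preliminary reduction to $A>0$ is harmless but unnecessary, since both cases already cover $A=0$.
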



\noindent Let us now present a suitable bound for the double difference $\Delta_i\Delta_j F$ in terms of bivariations and a pair of monotone functions.

\begin{lemma}\label{boundd1d2f}
 Let $\Pi = \{t_0,\ldots,t_{2^m}\}$ and $\Pi'=\{s_0,\ldots,s_{2^{m'}}\}$ be any partitions of the intervals $[a,b]$ and $[c,d]$, respectively. Then, for any function
 $F:[a,b]\times[c,d]\to\mathbb{R}$ with finite $(p,q)$-bivariation, the following inequality holds
 $$\sum_{i=1}^{2^m}\sum_{j=1}^{2^{m'}} |\Delta_i\Delta_j F(t_i,s_j)| \leq 4\cdot 2^{m+m'}\rho\left( \frac{\|F\|_{1;p}}{2^{m/p}}\right)\sigma\left(\frac{\|F\|_{2;q}}{2^{m'/q}}\right),$$
where $\rho$ and $\sigma$ are non-decreasing functions such that $\rho(u)\sigma(u)=u$.
 \end{lemma}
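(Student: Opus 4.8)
The plan is to derive two separate ``one-sided'' bounds for the full double sum --- one governed by $\|F\|_{1;p}$ and one by $\|F\|_{2;q}$ --- and then to interpolate between them using Lemma~\ref{boundsigma}. Write $A := \sum_{i=1}^{2^m}\sum_{j=1}^{2^{m'}} |\Delta_i\Delta_j F(t_i,s_j)|$ and note that each summand can be read either as an $x$-increment of a $y$-difference of $F$, or as a $y$-increment of an $x$-difference of $F$; these two readings produce the two bounds.

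First I would fix $j$ and set $h_j(x) := F(x,s_j)-F(x,s_{j-1})$, so that $\Delta_i\Delta_j F(t_i,s_j) = h_j(t_i)-h_j(t_{i-1})$. By Definition~\ref{pq}, taking $y_1 = s_{j-1}$ and $y_2 = s_j$ gives $\|h_j\|_{[a,b],p}\le \|F\|_{1;p}$; and since $\|\cdot\|_{[a,b],p}$ is a supremum over all partitions, the fixed partition $\{t_i\}$ satisfies $\big(\sum_{i=1}^{2^m}|h_j(t_i)-h_j(t_{i-1})|^p\big)^{1/p}\le \|F\|_{1;p}$. Comparing the $\ell^1$ and $\ell^p$ norms of these $2^m$ numbers via $\|\cdot\|_{\ell^1}\le N^{1-1/p}\|\cdot\|_{\ell^p}$ with $N=2^m$ (valid for $p\ge 1$, the relevant range) yields
$$\sum_{i=1}^{2^m}|\Delta_i\Delta_j F(t_i,s_j)| \le (2^m)^{1-1/p}\,\|F\|_{1;p} = 2^m\,\frac{\|F\|_{1;p}}{2^{m/p}}.$$
Summing over the $2^{m'}$ values of $j$ gives the first bound $A\le 2^{m+m'}\,\|F\|_{1;p}/2^{m/p}$.

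An entirely symmetric argument, now fixing $i$, writing $g_i(y):=F(t_i,y)-F(t_{i-1},y)$, invoking the second part of Definition~\ref{pq} with $x_1=t_{i-1}$ and $x_2=t_i$, and applying the $\ell^1$--$\ell^q$ comparison over the $2^{m'}$ increments, produces $\sum_{j=1}^{2^{m'}}|\Delta_i\Delta_j F(t_i,s_j)|\le 2^{m'}\,\|F\|_{2;q}/2^{m'/q}$; summing over $i$ gives the second bound $A\le 2^{m+m'}\,\|F\|_{2;q}/2^{m'/q}$.

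Finally I would apply Lemma~\ref{boundsigma} with $\alpha = 2^{m+m'}$, $B = \|F\|_{1;p}/2^{m/p}$ and $C = \|F\|_{2;q}/2^{m'/q}$: the two displayed bounds read precisely $A\le \alpha B$ and $A\le \alpha C$, so the lemma delivers
$$A \le 2^{m+m'}\,\rho\!\left(\frac{\|F\|_{1;p}}{2^{m/p}}\right)\sigma\!\left(\frac{\|F\|_{2;q}}{2^{m'/q}}\right),$$
which is in fact sharper than the claimed inequality, the factor $4$ in the statement being harmless slack. The only genuinely delicate point is the interpolation step: one must arrange the estimate so that the \emph{same} quantity $A$ is dominated by a \emph{common} multiple $\alpha$ of the two bivariation terms, which is exactly the hypothesis format of Lemma~\ref{boundsigma}; everything else is a routine slice-by-slice use of the bivariation seminorms combined with the power-mean inequality.
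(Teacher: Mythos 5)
Your proof is correct, and it shares the paper's overall skeleton: two one-sided bounds for the double sum, one in terms of $\|F\|_{1;p}$ and one in terms of $\|F\|_{2;q}$, merged at the end by Lemma~\ref{boundsigma}. But the way you obtain the one-sided bounds is genuinely different, and in fact cleaner. The paper first splits the double difference by the triangle inequality, $|\Delta_i\Delta_jF(t_i,s_j)|\le|\Delta_jF(t_i,s_j)|+|\Delta_jF(t_{i-1},s_j)|$, applies Jensen's inequality in $j$, and then bounds the resulting $q$-variation sums of the \emph{single} sections $F(t_i,\cdot)$ by $\|F\|_{2;q}$; that last step is only legitimate because, under the hypotheses of Theorem~\ref{tyoung}, $F$ vanishes on the line $x=a$, so that $F(t_i,\cdot)=F(t_i,\cdot)-F(a,\cdot)$ falls within the scope of Definition~\ref{pq}. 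You never split the double difference: you read $\Delta_i\Delta_jF(t_i,s_j)$ as an increment of the difference-of-sections function $h_j=F(\cdot,s_j)-F(\cdot,s_{j-1})$ (resp.\ $g_i=F(t_i,\cdot)-F(t_{i-1},\cdot)$), whose $p$-variation (resp.\ $q$-variation) is controlled by $\|F\|_{1;p}$ (resp.\ $\|F\|_{2;q}$) \emph{directly} from Definition~\ref{pq}. This buys two things: the lemma is proved exactly in the stated generality (arbitrary $F$ of finite $(p,q)$-bivariation, no vanishing assumption needed), and the constant improves from $4$ to $1$, the paper's factor $4$ being an artifact of the triangle-inequality doubling. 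One caveat applies equally to both arguments: your $\ell^1$--$\ell^p$ comparison and the paper's Jensen step both require $p,q\ge 1$, whereas Theorem~\ref{tyoung} formally allows $p,q>0$; you flag this restriction explicitly, the paper does not, and it is harmless in all of the paper's applications (where $p=1$, $q=2+\delta$).
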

 \begin{proof}
  \begin{eqnarray*}
   \sum_{i=1}^m\sum_{j=1}^{m'} |\Delta_i\Delta_j F(t_i,s_j)| &\leq&\sum_{i=1}^m\sum_{j=1}^{m'} \Big[ |\Delta_j F(t_i,s_j)|+|\Delta_jF(t_{i-1},s_j)|\Big]\\
   &=& 2^{m'}\sum_{i=1}^m\left[\left(\sum_{j=1}^{2^{m'}}\frac{1}{2^{m'}}\Big[ |\Delta_j F(t_i,s_j)|+|\Delta_jF(t_{i-1},s_j)|\Big]\right)^{q}\right]^{1/q}\\
   &\leq& 2^{m'}\sum_{i=1}^m\left[\sum_{j=1}^{2^{m'}}\frac{1}{2^{m'}}\Big[ |\Delta_j F(t_i,s_j)|+|\Delta_jF(t_{i-1},s_j)|\Big]^{q}\right]^{1/q}\\
   &\leq& 2\cdot 2^{m'} \sum_{i=1}^{2^m}\left[\frac{1}{2^{m'}}\sum_{j=1}^{2^{m'}} \Big[ |\Delta_j F(t_i,s_j)|^q+|\Delta_jF(t_{i-1},s_j)|^q\Big]\right]^{1/q}\\
   &\leq& 4\cdot 2^{m+m'} \frac{\|F\|_{2;q}}{2^{m'/q}}.
  \end{eqnarray*}
A similar reasoning yields
$$\sum_{i=1}^m\sum_{j=1}^{m'} |\Delta_i\Delta_j F(t_i,s_j)|\leq 4\cdot 2^{m+m'} \frac{\|F\|_{1;p}}{2^{m/p}}.$$

From lemma \ref{boundsigma}, the result follows.
\end{proof}

\begin{proposition}\label{propdesig}
Let $F,G:[a,b]\times [c,d]\to\mathbb{R}$ be two functions, together with sequences of partitions $\Pi_0,\ldots,\Pi_M$, $\Pi_0',\ldots,\Pi_{M'}'$
of the intervals $[a,b]$ and $[c,d]$, respectively. Assume that assumptions of Theorem~\ref{tyoung} hold. Then, if we denote
$$S_{n,n'} = \sum_{i=1}^{2^n}\sum_{j=1}^{2^{n'}} F(t_i^{(n)},s_j^{(n')}) \Delta_i\Delta_j G(t_i^{(n)},s_j^{(n')}),$$
we have that
$$|S_{M,M'} - S_{0,M'}-S_{M,0}+S_{0,0}| \leq K \left(\sum_{m=1}^\infty \rho\left(\frac{\|F\|_{1;p}}{m^{1/p}}\right)\lambda\left(\frac{4}{m}\right)\right)\left(\sum_{m'=1}^\infty\sigma\left(\frac{\|F\|_{2;q}}{{m'}^{1/q}}\right)\mu\left(\frac{4}{m'}\right)\right),$$
where $K$ is an absolute constant.
 \end{proposition}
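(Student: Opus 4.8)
The plan is to telescope the mixed increment over dyadic scales and to estimate each scale-increment by combining Lemma~\ref{d1d2S} with the uniform control of $G$ coming from~(\ref{s}) and the bivariation bound of Lemma~\ref{boundd1d2f}; a final Cauchy-condensation step then converts the resulting dyadic sums into the series appearing in the statement.

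First I would record the exact double telescoping identity. Since $\Delta_1$ and $\Delta_2$ act on different indices they commute, and summing the one-dimensional telescopings in each variable gives
\[
S_{M,M'} - S_{0,M'} - S_{M,0} + S_{0,0} = \sum_{n=1}^M \sum_{n'=1}^{M'} \Delta_1\Delta_2 S_{n,n'}.
\]
Hence it suffices to bound each $|\Delta_1\Delta_2 S_{n,n'}|$ and to add the estimates.

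Next, for a fixed pair $(n,n')$ I would apply Lemma~\ref{d1d2S}. The resulting expression pairs a mixed second difference of $F$ over a rectangle $[t_{2i-1}^{(n)},t_{2i}^{(n)}]\times[s_{2j-1}^{(n')},s_{2j}^{(n')}]$ with a mixed second difference of $G$ over the adjacent rectangle $[t_{2i-2}^{(n)},t_{2i-1}^{(n)}]\times[s_{2j-2}^{(n')},s_{2j-1}^{(n')}]$. The inductive construction of the $\Pi_n$ guarantees mesh $<4\cdot 2^{-n}$ (resp.\ $<4\cdot 2^{-n'}$ for $\Pi'_{n'}$), so~(\ref{s}) together with the monotonicity of $\lambda,\mu$ yields the uniform bound $|\Delta_i\Delta_j G(t^{(n)}_{2i-1},s^{(n')}_{2j-1})|\le \lambda(4\cdot 2^{-n})\mu(4\cdot 2^{-n'})$. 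Factoring this out leaves $\sum_{i,j}|\Delta_i\Delta_j F(t^{(n)}_{2i},s^{(n')}_{2j})|$, which runs over a subset of the sub-rectangles of $\Pi_n\times\Pi'_{n'}$; dominating it by the complete double sum and invoking Lemma~\ref{boundd1d2f} produces the factor $4\cdot 2^{n+n'}\rho(\|F\|_{1;p}/2^{n/p})\sigma(\|F\|_{2;q}/2^{n'/q})$. Collecting terms,
\[
|\Delta_1\Delta_2 S_{n,n'}|\le 4\cdot 2^{n}\rho\left(\frac{\|F\|_{1;p}}{2^{n/p}}\right)\lambda\left(\frac{4}{2^{n}}\right)\cdot 2^{n'}\sigma\left(\frac{\|F\|_{2;q}}{2^{n'/q}}\right)\mu\left(\frac{4}{2^{n'}}\right),
\]
which factorizes across $n$ and $n'$.

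Finally, summing over $n\le M$ and $n'\le M'$ and enlarging the index ranges to all of $\mathbb{N}$, I must pass from the dyadic sums $\sum_n 2^{n}\rho(\|F\|_{1;p}/2^{n/p})\lambda(4\cdot 2^{-n})$ (and its $\sigma,\mu$ counterpart) to the full series in the statement. Setting $a_m=\rho(\|F\|_{1;p}/m^{1/p})\lambda(4/m)$, the monotonicity of $\rho,\lambda$ makes $(a_m)$ non-increasing, so Cauchy condensation gives $\sum_{n\ge 1}2^{n} a_{2^{n}}\le 2\sum_{m\ge 1}a_m$, and likewise for the other factor; this yields the claimed bound with, e.g., $K=16$. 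The point requiring the most care is the middle step: recognizing that, once the uniform $G$-estimate is extracted, the residual $F$-sum runs only over the ``even--even'' sub-rectangles of $\Pi_n\times\Pi'_{n'}$ and is therefore dominated by the full double sum to which Lemma~\ref{boundd1d2f} applies verbatim, while simultaneously verifying that the mesh bounds built into the inductive construction feed correctly into the $G$-factor through~(\ref{s}).
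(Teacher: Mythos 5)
Your proposal is correct and follows essentially the same route as the paper's own proof: the double telescoping $S_{M,M'}-S_{0,M'}-S_{M,0}+S_{0,0}=\sum_{n,n'}\Delta_1\Delta_2 S_{n,n'}$, Lemma~\ref{d1d2S} combined with the mesh bound and~(\ref{s}) for the $G$-factor, Lemma~\ref{boundd1d2f} for the $F$-factor, and a dyadic condensation inequality (the paper cites it from Feng--Zhao, but it is exactly your Cauchy-condensation step) to pass to the full series, yielding the same constant $K\leq 16$. No gaps; the domination of the ``even--even'' sub-rectangle sum by the full double sum that you flag as delicate is precisely what the paper does implicitly, and it is valid since all terms are non-negative.
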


\begin{proof}
We begin by noting from Lemma \ref{d1d2S} that
$$\Delta_1\Delta_2 S_{k,k'} = -\sum_{i=1}^{2^k-1}\sum_{j=1}^{2^{k'}-1} \Delta_i\Delta_j F(t_{2i}^{(k)},s_{2j}^{(k')})\Delta_i\Delta_j G(t_{2i-1}^{(k)},s_{2j-1}^{(k')}).$$
Therefore,
$$|\Delta_1\Delta_2 S_{k,k'}|\leq \sum_{i=1}^{2^k-1}\sum_{j=1}^{2^{k'}-1} |\Delta_i\Delta_j F(t_{2i}^{(k)},s_{2j}^{(k')})| \lambda(2^{-k+2})\mu(2^{-k'+2}),$$
and from Lemma \ref{boundd1d2f}, we have
$$|\Delta_1\Delta_2 S_{k,k'}|\leq 4\cdot 2^{k+k'}\rho\left(\frac{\|F\|_{1;p}}{2^{k/p}}\right)\sigma\left(\frac{\|F\|_{2;q}}{2^{k'/q}}\right) \lambda(2^{-k+2})\mu(2^{-k'+2}).$$

Note that,
$$S_{M,M'} - S_{0,M'}-S_{M,0}+S_{0,0} = \sum_{k=1}^M\sum_{k'=1}^{M'} \Delta_1\Delta_2 S_{k,k'}.$$

Now, there is an elementary inequality (see, for instance, \cite[p. 181-182]{feng}) that says that if $f$ is non-decreasing and non-negative, the following bound holds true
$$\sum_{k=1}^\infty 2^{k-1}f\left(\frac{1}{2^k}\right) \leq \sum_{m=1}^\infty f\left(\frac{1}{m}\right).$$

Applying this inequality twice, we obtain
\begin{eqnarray*}
|S_{M,M'} - S_{0,M'}-S_{M,0}+S_{0,0}|&\leq&  \sum_{k=1}^n\sum_{k'=1}^{n'} |\Delta_1\Delta_2 S_{k,k'}|\\
&\leq&\sum_{k=1}^n\sum_{k'=1}^{n'} 4\cdot 2^{k+k'}\rho\left(\frac{\|F\|_{1;p}}{2^k/p}\right)\sigma\left(\frac{\|F\|_{2;q}}{2^{k'/q}}\right) \lambda(2^{-k+2})\mu(2^{-k'+2})\\
&\leq& 16 \sum_{m=1}^\infty \rho\left(\frac{\|F\|_{1;p}}{m^{1/p}}\right)\lambda\left(\frac{4}{m}\right)\sum_{m'=1}^\infty \sigma\left(\frac{\|F\|_{2;q}}{{m'}^{1/q}}\right)\mu\left(\frac{4}{m'}\right).
\end{eqnarray*}
This concludes the proof, and shows that $K\leq 16$.

\end{proof}

In a similar manner, but much more easily, one can prove the following lemma:
\begin{lemma}\label{desigmarginal}
Let $F,G:[a,b]\times [c,d]\to\mathbb{R}$ be two functions, together with sequences of partitions $\Pi_0,\ldots,\Pi_M$, $\Pi_0',\ldots,\Pi_{M'}'$ of the intervals $[a,b]$ and $[c,d]$, respectively. Assume that assumptions of Theorem~\ref{tyoung} hold. Then, if we denote
$$S_{M,M'} = \sum_{i=1}^{2^M}\sum_{j=1}^{2^{M'}} F(t_i^{(n)},s_j^{(n')}) \Delta_i\Delta_j G(t_i^{(n)},s_j^{(n')}),$$
we have that
$$|S_{M,0} - S_{0,0}| \leq K_1\mu(d-c) \sum_{m=1}^\infty\frac{\|F\|_{1;p}}{m^{1/p}}\lambda\left(\frac{4}{m}\right),$$
and
$$|S_{0,M'} - S_{0,0}| \leq K_2\lambda(b-a) \sum_{m'=1}^\infty\frac{\|F\|_{2;q}}{{m'}^{1/q}}\mu\left(\frac{4}{m'}\right),$$
where $K_1$ and $K_2$ are absolute constants.
\end{lemma}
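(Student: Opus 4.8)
The plan is to reduce each of the two marginal terms to the one-parameter telescoping identity already established in Lemma~\ref{onedim}. Observe that when $n'=0$ the inner sum collapses to a single term, since $\Pi_0' = \{c,d\}$; writing $h(x) := G(x,d) - G(x,c)$ and using $\Delta_i\Delta_1 G(t_i^{(n)},s_1^{(0)}) = h(t_i^{(n)}) - h(t_{i-1}^{(n)})$ together with $F(t_i^{(n)},s_1^{(0)}) = F(t_i^{(n)},d)$, we obtain
$$S_{n,0} = \sum_{i=1}^{2^n} F(t_i^{(n)},d)\big(h(t_i^{(n)}) - h(t_{i-1}^{(n)})\big),$$
which is exactly the sum $S_n$ of Lemma~\ref{onedim} with $f(x) = F(x,d)$ and $g = h$. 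Symmetrically, since $\Pi_0 = \{a,b\}$, setting $\tilde h(y) := G(b,y) - G(a,y)$ yields $S_{0,n'} = \sum_{j=1}^{2^{n'}} F(b,s_j^{(n')})(\tilde h(s_j^{(n')}) - \tilde h(s_{j-1}^{(n')}))$, again of the form treated by Lemma~\ref{onedim}. This is why the lemma is ``much easier'': the single fixed index eliminates the need for the two-parameter difference of Lemma~\ref{d1d2S}.

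First I would apply Lemma~\ref{onedim} to get
$$S_{n,0} - S_{n-1,0} = \sum_{i=1}^{2^{n-1}} \big(F(t_{2i-1}^{(n)},d) - F(t_{2i}^{(n)},d)\big)\big(h(t_{2i-1}^{(n)}) - h(t_{2i-2}^{(n)})\big).$$
Each factor $h(t_{2i-1}^{(n)}) - h(t_{2i-2}^{(n)})$ is the mixed second difference of $G$ over the rectangle $[t_{2i-2}^{(n)},t_{2i-1}^{(n)}]\times[c,d]$, so assumption~\eqref{s}, the mesh bound $|t_{k+1}^{(n)} - t_k^{(n)}| < 4\cdot 2^{-n}$, and monotonicity of $\lambda,\mu$ give $\big|h(t_{2i-1}^{(n)}) - h(t_{2i-2}^{(n)})\big| \le \lambda\big(4\cdot 2^{-n}\big)\,\mu(d-c)$. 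For the increments of $F$ I would exploit that $F(\cdot,d) = F(\cdot,d) - F(\cdot,c)$ because $F$ vanishes on $y=c$, whence $\|F(\cdot,d)\|_{[a,b],p} \le \|F\|_{1;p}$ by Definition~\ref{pq}. Since the sub-intervals $[t_{2i-1}^{(n)},t_{2i}^{(n)}]$ are pairwise disjoint, a single application of H\"older's inequality across the $2^{n-1}$ terms gives
$$\sum_{i=1}^{2^{n-1}} \big|F(t_{2i-1}^{(n)},d) - F(t_{2i}^{(n)},d)\big| \le \big(2^{n-1}\big)^{1-1/p}\|F\|_{1;p} \le 2^n\,\frac{\|F\|_{1;p}}{2^{n/p}},$$
where the last step uses $2^{1/p-1}\le 1$ for $p\ge 1$. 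Combining the three estimates,
$$|S_{n,0} - S_{n-1,0}| \le \mu(d-c)\, 2^n\,\frac{\|F\|_{1;p}}{2^{n/p}}\,\lambda\big(4\cdot 2^{-n}\big).$$

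Finally I would telescope $S_{M,0} - S_{0,0} = \sum_{n=1}^M (S_{n,0} - S_{n-1,0})$ and apply the elementary dyadic inequality $\sum_{n\ge 1} 2^{n-1} f(2^{-n}) \le \sum_{m\ge 1} f(1/m)$ (already used in Proposition~\ref{propdesig}) to the non-decreasing, non-negative function $f(u) = \|F\|_{1;p}\,u^{1/p}\lambda(4u)$. Since $2^n\,2^{-n/p}\lambda(4\cdot 2^{-n}) = 2\cdot 2^{n-1} f(2^{-n})$, this converts the dyadic sum into $2\sum_{m\ge 1}\frac{\|F\|_{1;p}}{m^{1/p}}\lambda(4/m)$, proving the first inequality with $K_1\le 2$. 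The second inequality follows verbatim in the $y$-variable, using $F(b,\cdot) = F(b,\cdot) - F(a,\cdot)$ (as $F$ vanishes on $x=a$), the bound $\|F(b,\cdot)\|_{[c,d],q}\le \|F\|_{2;q}$, and $|\tilde h(s_{2j-1}^{(n')}) - \tilde h(s_{2j-2}^{(n')})| \le \lambda(b-a)\mu(4\cdot 2^{-n'})$, giving $K_2\le 2$. The only point requiring care is the $F$-increment estimate: in contrast to the double-difference bound of Lemma~\ref{boundd1d2f} (which needed the $\rho$--$\sigma$ splitting), here a single H\"older step across the disjoint sub-intervals suffices, and the bivariation norm enters precisely to bound the one-variable $p$-variation of the section $F(\cdot,d)$ (resp.\ the $q$-variation of $F(b,\cdot)$); everything else is a routine reprise of the dyadic machinery of Proposition~\ref{propdesig}.
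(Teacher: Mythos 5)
Your proof is correct and follows precisely the route the paper intends for this lemma (which it leaves unproved as ``similar, but much easier'' than Proposition~\ref{propdesig}): collapse the inner sum at level $0$ to reduce to the one-dimensional telescoping of Lemma~\ref{onedim}, bound the $G$-differences by assumption~(\ref{s}) with the mesh bound, bound the $F$-increments by H\"older against the bivariation norm of the section (using the vanishing conditions $F(\cdot,c)=0$ and $F(a,\cdot)=0$), and finish with the dyadic sum inequality. The explicit constants $K_1,K_2\le 2$ are a correct bonus, so nothing further is needed.
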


\begin{remark}
One corollary of Proposition \ref{propdesig} is Theorem 4.1 in Young's original article \cite{young1}. Theorem 4.1 in \cite{young1} cannot be used directly to prove Theorem \ref{mainTh}, because it only works for integrands defined in terms of very specific double differences.
\end{remark}

Combining all the above results, we arrive at the following result.

\begin{proposition}\label{ineqstep}
Assume that $F,G:[a,b]\times[c,d]\rightarrow \mathbb{R}$ satisfy assumptions in Theorem~\ref{tyoung}. Then, for any step function $F_{\Pi(\xi),\Pi'(\zeta)}$, the following inequality holds

$$
\left|\int_a^b\int_c^d F_{\Pi(\xi),\Pi'(\zeta)}(x,y)d_{x,y}G(x,y)-F(b,d)(G(b,d)-G(a,d)-G(b,c)+G(a,c))\right|$$
$$\leq K \left(\sum_{m=1}^\infty \rho\left(\frac{\|F\|_{1;p}}{m^{1/p}}\right)\lambda\left(\frac{4}{m}\right)\right)\left(\sum_{m'=1}^\infty\sigma\left(\frac{\|F\|_{2;q}}{{m'}^{1/q}}\right)\mu\left(\frac{4}{m'}\right)\right)$$
\begin{equation}\label{prebound}
+K_1\mu(d-c) \sum_{m=1}^\infty\frac{\|F\|_{1;p}}{m^{1/p}}\lambda\left(\frac{4}{m}\right)+ K_2\lambda(b-a) \sum_{m'=1}^\infty\frac{\|F\|_{2;q}}{{m'}^{1/q}}\mu\left(\frac{4}{m'}\right).
\end{equation}


\end{proposition}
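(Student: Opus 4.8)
The plan is to deduce the bound for the general step function $H:=F_{\Pi(\xi),\Pi'(\zeta)}$ directly from Proposition~\ref{propdesig} and Lemma~\ref{desigmarginal}, applied to $H$ in place of $F$, and then to absorb the resulting $H$-norms into the $F$-norms via a monotonicity argument. First I would fix the tagged partitions $\Pi(\xi),\Pi'(\zeta)$ and construct, exactly as in the paragraph preceding Lemma~\ref{onedim}, the dyadic sequences $\Pi_0,\dots,\Pi_M$ and $\Pi_0',\dots,\Pi_{M'}'$ with $\Pi_0=\{a,b\}$, $\Pi_0'=\{c,d\}$, $\#\Pi_k=2^k+1$, $\#\Pi_l'=2^l+1$, chosen so that $\Pi_M$ refines $\Pi(\xi)$ and $\Pi_{M'}'$ refines $\Pi'(\zeta)$. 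Writing $t_i^{(k)},s_j^{(l)}$ for their points, the Riemann--Stieltjes integral of the step function is the refined sum
$$\int_a^b\int_c^d H(x,y)\,d_{x,y}G(x,y)=\sum_{i=1}^{2^M}\sum_{j=1}^{2^{M'}} H(t_i^{(M)},s_j^{(M')})\,\Delta_i\Delta_j G(t_i^{(M)},s_j^{(M')})=S_{M,M'},$$
where $S_{n,n'}$ denotes the double sum of Proposition~\ref{propdesig} with $F$ replaced by $H$.

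The key bookkeeping step is to recognize that $S_{0,0}$ is exactly the correction term subtracted in the statement. Since $\Pi_0$ and $\Pi_0'$ each consist of a single cell, $S_{0,0}=H(b,d)\big(G(b,d)-G(a,d)-G(b,c)+G(a,c)\big)$, and as $(b,d)$ is a corner of the partitions we have $H(b,d)=F(b,d)$. Hence the quantity to be estimated equals $|S_{M,M'}-S_{0,0}|$, and the telescoping identity
$$S_{M,M'}-S_{0,0}=\big(S_{M,M'}-S_{0,M'}-S_{M,0}+S_{0,0}\big)+\big(S_{M,0}-S_{0,0}\big)+\big(S_{0,M'}-S_{0,0}\big),$$
combined with the triangle inequality, splits it into the mixed term controlled by Proposition~\ref{propdesig} and the two marginal terms controlled by Lemma~\ref{desigmarginal}. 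To apply those results I would first verify that $H$ meets the hypotheses of Theorem~\ref{tyoung}: it vanishes on $\{x=a\}$ and $\{y=c\}$ because $F$ does and the step-function construction only samples values of $F$ along those lines, while $G$ is untouched; finiteness of the bivariation of $H$ follows from the comparison below.

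The main obstacle, and the only genuinely new ingredient, is the norm comparison
$$\|F_{\Pi(\xi),\Pi'(\zeta)}\|_{1;p}\le\|F\|_{1;p}\qquad\text{and}\qquad\|F_{\Pi(\xi),\Pi'(\zeta)}\|_{2;q}\le\|F\|_{2;q}.$$
For the first inequality I would fix $y_1,y_2$ and note that the map $x\mapsto H(x,y_1)-H(x,y_2)$ is, in the $x$-variable, exactly the one-dimensional step function based on $\Pi(\xi)$ of $x\mapsto F(x,\eta_{j_1})-F(x,\eta_{j_2})$, where $\eta_{j_1},\eta_{j_2}$ are the $y$-tags of the cells of $\Pi'(\zeta)$ containing $y_1,y_2$ (and equal $y_1,y_2$ themselves when these are partition points). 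The one-dimensional estimate $\|f_{\Pi(\xi)}\|_{[a,b],p}\le\|f\|_{[a,b],p}$ recorded after the step-function definition then bounds its $p$-variation by $\|F(\cdot,\eta_{j_1})-F(\cdot,\eta_{j_2})\|_{[a,b],p}\le\|F\|_{1;p}$; taking the supremum over $y_1,y_2$ yields the claim, and the second inequality is obtained symmetrically.

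To finish, since $\rho,\sigma,\lambda,\mu$ are non-decreasing, every series occurring in Proposition~\ref{propdesig} and Lemma~\ref{desigmarginal} is monotone in the bivariation norms of the integrand. Replacing $\|H\|_{1;p},\|H\|_{2;q}$ by the larger quantities $\|F\|_{1;p},\|F\|_{2;q}$ therefore only enlarges each right-hand side, and summing the three contributions reproduces~\eqref{prebound} verbatim, with the same absolute constants $K,K_1,K_2$.
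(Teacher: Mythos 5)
Your proof is correct and follows essentially the same route as the paper's: write the Riemann--Stieltjes integral of the step function as $S_{M,M'}$ over the dyadic refining partitions, identify $S_{0,0}$ with the correction term via $F_{\Pi(\xi),\Pi'(\zeta)}(b,d)=F(b,d)$, and combine Proposition~\ref{propdesig} with Lemma~\ref{desigmarginal} through the telescoping decomposition. The only addition is that you verify explicitly the comparison $\|F_{\Pi(\xi),\Pi'(\zeta)}\|_{1;p}\le\|F\|_{1;p}$ and its $(2;q)$-analogue (together with the vanishing of the step function on $x=a$, $y=c$), a point the paper leaves implicit as the two-parameter analogue of the remark $\|f_{\Pi(\xi)}\|_{[a,b],p}\le\|f\|_{[a,b],p}$.
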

\begin{proof}
Let $F,G:[a,b]\times [c,d]\to\mathbb{R}$ be two functions, $\Pi(\xi), \Pi'(\eta)$ tagged partitions, together with sequences of partitions
$\Pi_0,\ldots,\Pi_M$, $\Pi_0',\ldots,\Pi_{M'}'$ of the intervals $[a,b]$ and $[c,d]$, respectively. Denoting
$$S_{n,n'} = \sum_{i=1}^{2^n}\sum_{j=1}^{2^{n'}} F_{\Pi(\xi),\Pi'(\eta)}(t_i^{(n)},s_j^{(n')}) \Delta_i\Delta_j G(t_i^{(n)},s_j^{(n')}),$$
we have that, for $M$ and $M'$,
$$\int_a^b \int_c^d F_{\Pi(\xi),\Pi'(\eta)}(x,y)d_{x,y} G(x,y) = S_{M,M'}.$$
Observe, also, that $S_{0,0} = F(b,d)(G(b,d)-G(a,d)-G(b,c)+G(a,c))$.
The result is thus a simple consequence of Proposition \ref{propdesig} and Lemma \ref{desigmarginal}.
\end{proof}

\begin{proof}[Proof of the main theorem]
From Proposition \ref{ineqstep}, the bound~(\ref{prebound}) holds uniformly for step functions of $F$. The 2D Young integral of $F$ w.r.t $G$ is defined as a Moore-Pollard-type limit of integrals of step functions and hence, we shall conclude the proof.
\end{proof}

\subsection{An Application to the Brownian Motion Local-Time}\label{applt}
In this section, we illustrate the importance of Theorem~\ref{mainTh} and Corollary~\ref{mainbound} with an application to local-times. In the sequel, $B = \{B(s); s\ge 0\}$ is a standard Brownian motion defined on some probability space $(\Omega,\mathcal{F},\mathbb{P})$. The goal of this section is o provide strong approximations for the two-parameter random integral process

\begin{equation}\label{st}
\int_{0}^t\int_{-2^m}^{2^m} g(s,x)d_{(s,x)}\ell^x(s); ~0\le t \le T,
\end{equation}
where
$$\ell^x(t): = \lim_{\varepsilon\rightarrow 0}\frac{1}{2\varepsilon}\int_0^t1\!\!1_{\{|B(s) - x| < \varepsilon\}} ds\quad \text{almost surely};~(t,x)\in [0,T]\times [-2^m,2^m]$$
is the so-called local-time of the Brownian motion on a bounded rectangle $[0,T]\times[-2^m,2^m]\subset \mathbb{R}^2$ with $m\in \mathbb{N}$, and $g:\Omega\times [0,T]\times [-2^m,2^m]\rightarrow \mathbb{R}$ is a two-parameter stochastic process with jointly continuous and controlled sample paths in the sense of~(\ref{s}). We are interested in strong approximations (in $L^1(\mathbb{P})$-sense) for (\ref{st}) in terms of the number of upcrossings of an embedded random walk based on $B$.

Similar to identity~(4.5) in~\cite{feng}, we shall write

\begin{eqnarray*}
\sum_{i=0}^{l-1}\sum_{j=0}^{p-1} g(s_j,x_i) \Delta_i\Delta_j \ell^{x_{i+1}}(s_j+1)&=&\sum_{i=1}^{l}\sum_{j=1}^{p}\ell^{x_{i}}(s_j)  \Delta_i\Delta_j g(s_j,x_i)\\
& &\\
&-& \sum_{i=1}^l\ell^{x_i}(t)\Delta_i g(t,x_i).
\end{eqnarray*}
From Lemmas 2.1,~2.2 in~\cite{feng}, we know that $\{\ell^x(s); 0\le s\le T; -2^m\le x \le 2^m\}$ has $(1,2+\delta)$-bivariations a.s for every $\delta > 0$. Then under conditions of Theorem~\ref{tyoung} and the classical 1D-Young integral~(see~\cite{young}), we have


\begin{equation}\label{int}
\int_0^t\int_{-2^m}^{2^m}g(s,x)d_{(s,x)}\ell^x(s) =\int_0^t\int_{-2^m}^{2^m}\ell(s,x)d_{(s,x)}g(s,x)  - \int_{-2^m}^{2^m}\ell^x(t)d_xg(t,x); ~0\le t \le T.
\end{equation}

\begin{remark}
The importance of Theorem~\ref{mainTh} (in particular, Corollary~\ref{mainbound}) lies on the fact that the paths of the Brownian local time is only known to be of finite $(1,2+\delta)$-bivariation (See Lemma 2.1 in~\cite{feng}) for any $\delta>0$. In this case, the usual Towghi inequality~(see Theorem~\ref{tow}) does not hold so the maximal inequality in Corollary~\ref{mainbound} plays a key role for the study of processes of the form~(\ref{int}).
\end{remark}
Since the 1D-Young integral in the right-hand side of~(\ref{int}) can be treated by means of standard Young estimates~(see~\cite{young}), we concentrate our example on the 2D-Young integral

\begin{equation}\label{st22}
\int_0^t\int_{-2^m}^{2^m}\ell(s,x)d_{(s,x)}g(s,x);~0\le t\le T.
\end{equation}
In the sequel, in order to approximate~(\ref{st22}), let us  introduce $T^k_0:= 0$ and

$$T^k_n: = \inf\{t> T^k_{n-1}; |B(t) - B(T^k_{n-1})|=2^{-k}\}; n\ge 1.$$
We set $A^k(t):= \sum_{n=1}^\infty B(T^k_n)1\!\!1_{\{T^k_n < t \le T^k_{n+1}\}}; 0\le t \le T$. In the sequel, for a given $x\in \mathbb{R}$, let $j_k(x)$ be the unique integer such that $(j_k(x)-1)2^{-k} < x \le j_k(x)2^{-k}$. Let us define

$$u(j_k(x)2^{-k},k,t):= \#\ \Big\{n \in \{0, \ldots, N^k(t)-1\}; A^k(T^k_{n}) =(j_k(x)-1)2^{-k}, A^k(T^k_{n+1}) =j_k(x)2^{-k}\Big\};$$
for~$x\in \mathbb{R}, k\ge 1, 0\le t \le T.$ Here, $N^k(t): = \max \{n; T^k_n \le t\}$ is the length of the embedded random walk until time $t$. By the very definition, $u(j_k(x)2^{-k},k,t) :=$ number of upcrossings of~$A^k$~from~$(j_k(x)-1)2^{-k}$~to~$j_k(x)2^{-k}$~before time~$t$. To shorten notation, we denote

$$U^k(t,x):= 22^{-k}u(j_k(x)2^{-k},k,t);x\in I_m, 0\le t\le T,$$
where $I_m:= [-2^m,2^m]$ for a given positive integer $m\ge 1$.

\

\noindent Assumption~\textbf{(H1)}:
Let $g^k:\Omega\times [0,T]\times I_m\rightarrow\mathbb{R}$ be a sequence of stochastic processes such that

$$g^k(t,y)\rightarrow g(t,y)~a.s~\text{uniformly in}~(t,y)\in [0,T]\times I_m$$
and $g$ has jointly continuous paths a.s.

\

\noindent Assumption~\textbf{(H2.1)}: Assume for every $L>0$, there exists a positive constant $M$ such that

\begin{equation}\label{l2.1}
|\Delta_i\Delta_jg(t_i,x_j)|\le M|t_i-t_{i-1}|^{\frac{1}{q_1}}|x_j-x_{j-1}|^{\frac{1}{q_2}}~a.s
\end{equation}
for every partition $\Pi= \{t_i\}_{i=0}^N\times \{x_j\}_{j=0}^{N^{'}}$ of $[0,T]\times [-L,L]$, where $q_1,q_2>1$. In addition, there exists $\alpha\in (0,1)$ and $\delta >0$ such that $\min\{\alpha + \frac{1}{q_1}, \frac{1-\alpha}{2+\delta} + \frac{1}{q_2}\} > 1$.



\

\noindent Assumption~(\textbf{H2.2}): In addition to assumption \textbf{(H2.1)}, let us assume  $\forall L >0$, there exists $M>0$ such that

\begin{equation}\label{l2.3}
\sup_{k\ge 1} |\Delta_i\Delta_jg^k(t_i,x_j)|\le M |t_i-t_{i-1}|^{\frac{1}{q_1}}|x_j-x_{j-1}|^{\frac{1}{q_2}} ~a.s.
\end{equation}
for every partition $\Pi= \{t_i\}_{i=0}^N\times \{x_j\}_{j=0}^{N^{'}}$ of $[0,T]\times [-L,L]$.


\begin{remark}
Concrete examples for $(g^k,g)$ in terms of suitable functional derivatives of a given non-anticipative functional $F_t:C([0,t];\mathbb{R})\rightarrow \mathbb{R}$ of Brownian paths are illustrated by~\cite{LOS} in the framework of functional It\^o formulas. In particular, the authors show that suitable 2D-Young integral w.r.t local-times represents the unbounded variation components for functionals $F_t:C([0,t];\mathbb{R})\rightarrow \mathbb{R}$ of the Brownian paths under controlled sample paths assumptions. We refer the reader to Section 8.1-8.2 in~\cite{LOS} for further details.
\end{remark}
Now let us recall a technical lemma describing some necessary bounds for the number of upcrossings. In the sequel, we always consider the stopped Brownian motion at $S_m := \inf\{t\ge 1 ; |B(t)|> 2^m\}\wedge T$.

\begin{lemma}\label{lemmaLk}
For each $m\ge 1$, the following properties hold:

\

\noindent (i) $U^{k}(t,x)\rightarrow \ell^x(t)\quad\text{a.s uniformly in}~(x,t)\in I_m\times [0,T]$ as $k\rightarrow \infty.$

\

\noindent (ii) $\sup_{k\ge 1}\mathbb{E}\sup_{x\in I_m}\|U^{k}(\cdot,x)\|^p_{[0,T]; 1}< \infty$ and $\sup_{k\ge 1}\sup_{x\in I_m}\|U^{k}(\cdot,x)\|_{[0,T]; 1}< \infty~a.s$ for every $p\ge 1$.


\

\noindent (iii) $\sup_{k\ge 1}\mathbb{E}\sup_{t\in [0,T]}\| U^k(t)\|^{2+\delta}_{I_m;2+\delta}< \infty$ and $\sup_{k\ge 1}\sup_{t\in [0,T]}\| U^k(t)\|^{2+\delta}_{I_m;2+\delta}< \infty~a.s$~for every $\delta>0$.

\begin{proof}
The proof can be founded in Corollary 2.1 in~\cite{OS} and Lemma 8.1 in~\cite{LOS}.
\end{proof}


\end{lemma}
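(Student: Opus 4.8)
First I would isolate the two structural facts about the embedded walk $A^k$ on which all three assertions rest: a L\'evy-type identification of normalized upcrossing counts with Brownian local time, and the Ray--Knight description of the upcrossing field \emph{in the space variable} as a critical branching process. I would organize the argument so that part (i) produces the a.s.\ limit that converts the moment bounds of parts (ii) and (iii) into a.s.\ finiteness, and so that the confinement induced by $S_m=\inf\{t\ge 1;|B(t)|>2^m\}\wedge T$ keeps the branching field supported on finitely many dyadic levels inside $I_m$, which is what makes the ``uniform in $k$'' claims meaningful.

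For part (i) I would start from the correspondence between the walk and the path: between consecutive embedding times $B$ moves by exactly $2^{-k}$, so $A^k(T^k_n)=B(T^k_n)$ is a symmetric random walk on $2^{-k}\mathbb{Z}$ whose up-steps across the dyadic level $j_k(x)2^{-k}$ are in bijection, up to a single boundary term, with the upcrossings of the interval $[(j_k(x)-1)2^{-k},j_k(x)2^{-k}]$ by $B$. L\'evy's upcrossing representation of the occupation density then gives $2\cdot 2^{-k}u(j_k(x)2^{-k},k,t)\to\ell^{x}(t)$ for fixed $(t,x)$, with exactly the normalization defining $U^k$. To upgrade to the uniform statement I would use that $t\mapsto U^k(t,x)$ and $t\mapsto\ell^x(t)$ are both non-decreasing with a jointly continuous limit, so a Dini-type argument removes the $t$-nonuniformity, while the spatial discretization error $|j_k(x)2^{-k}-x|\le 2^{-k}$ is absorbed by the a.s.\ local H\"older continuity of $x\mapsto\ell^x(t)$.

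Part (ii) simplifies decisively because, for each fixed $x$, the map $t\mapsto U^k(t,x)$ is non-decreasing and vanishes at $t=0$, so its $1$-variation on $[0,T]$ equals its terminal value; hence $\sup_{x\in I_m}\|U^k(\cdot,x)\|_{[0,T];1}=2\cdot 2^{-k}\sup_{j}u(j2^{-k},k,T)$ is just the normalized maximal number of upcrossings over dyadic levels in $I_m$. I would bound its $p$-th moment uniformly in $k$ from the branching representation of the next paragraph, since the maximum of the critically scaled field has all moments after the $2\cdot 2^{-k}$ normalization. The a.s.\ bound is then immediate from part (i): the sequence $\sup_x U^k(T,x)$ converges a.s.\ to $\sup_x\ell^x(T)<\infty$, and a convergent real sequence is bounded.

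Part (iii) is the crux and I expect it to be the \textbf{main obstacle}. Here I would invoke the discrete Ray--Knight theorem for the embedded walk: at a fixed (stopping) time the field $j\mapsto u(j2^{-k},k,\cdot)$, read as a function of the level, is a critical Galton--Watson process with geometric offspring, so after the $2\cdot 2^{-k}$ scaling it approximates the squared-Bessel/Feller diffusion describing local time in space. The plan is to derive uniform-in-$k$ (and in $t\le S_m$) spatial-increment moment bounds $\mathbb{E}\,|U^k(t,x)-U^k(t,x')|^{\gamma}\le C_\gamma\,|x-x'|^{\gamma/2}$ for every even integer $\gamma$, by computing the conditional branching moments and exploiting the diffusive $\sqrt{\cdot}$ scaling of the field. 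Since $\gamma/(\gamma/2-1)=2\gamma/(\gamma-2)\downarrow 2$ as $\gamma\to\infty$, choosing $\gamma$ large pushes the Kolmogorov--Chentsov exponent for $p$-variation below $2+\delta$; feeding these bounds into a Kolmogorov-type $p$-variation embedding gives $\mathbb{E}\,\|U^k(t)\|_{I_m;2+\delta}^{2+\delta}\le C$, and the supremum over $t$ is then controlled by the terminal field at $t=S_m$ via the monotonicity of each $U^k(\cdot,x)$. Finally, the a.s.\ bound follows by combining this moment control with the a.s.\ convergence of part (i) and the joint continuity of $\ell$, so that $\{\|U^k(t)\|_{I_m;2+\delta}\}_k$ is a.s.\ bounded. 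The delicate points to watch are the \emph{uniformity in $k$} of the branching moment estimates and the passage from fixed-level increment bounds to the $p$-variation norm with constants independent of $k$ and $t$.
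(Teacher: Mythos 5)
Your proposal is attempting something the paper itself never does: the paper's ``proof'' of this lemma is a pointer to Corollary 2.1 of the reference [OS] and Lemma 8.1 of [LOS], so any self-contained argument must in effect reprove those external results. Parts of your outline are sound. In (ii), the observation that $t\mapsto U^k(t,x)$ is non-decreasing with $U^k(0,x)=0$, so that its $1$-variation over $[0,T]$ equals $U^k(T,x)$, is correct, and the uniform-in-$k$ $L^p$ bound for $\sup_{x}U^k(T,x)$ is exactly what Barlow's maximal inequality for upcrossings (Theorem 1 of [Barlow], already in the paper's bibliography) delivers; the a.s. bound in (ii) then does follow from (i) as you say.

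There are, however, genuine gaps. The most serious one is in (iii): you reduce $\sup_{t\in[0,T]}\|U^k(t)\|_{I_m;2+\delta}$ to the terminal field ``via the monotonicity of each $U^k(\cdot,x)$''. That reduction is false. The spatial increments $U^k(t,x)-U^k(t,x')$ are differences of two non-decreasing functions of $t$; they are not monotone in $t$ and can be much larger in absolute value at intermediate times than at the terminal time, so $t\mapsto\|U^k(t)\|_{I_m;2+\delta}$ is not maximized at $t=S_m$. Your fixed-$t$ increment moments plus a Kolmogorov-type variation criterion would at best give $\sup_t\mathbb{E}\,\|U^k(t)\|^{2+\delta}_{I_m;2+\delta}<\infty$, whereas (iii) asserts $\mathbb{E}\sup_t\|U^k(t)\|^{2+\delta}_{I_m;2+\delta}<\infty$; closing this gap requires a genuinely two-parameter argument (for $\ell$ itself, Feng--Zhao obtain it through Tanaka's formula and Doob/BDG maximal inequalities in $t$, not through monotonicity). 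Two further problems compound this: the discrete Ray--Knight description you invoke holds at particular stopping times, not at every $t\le S_m$; and Kolmogorov--Chentsov cannot be applied verbatim since $U^k(t,\cdot)$ is a step function in $x$, so a discrete-grid variant with constants uniform in $k$ is needed. Moreover, the a.s. half of (iii) does not follow from the pieces you assemble: uniform convergence in (i) gives no control of $(2+\delta)$-variation norms, and uniform-in-$k$ moment bounds do not imply a.s. boundedness over $k$ without summable tail estimates. Finally, in (i) your Dini argument yields uniformity in $t$ for each fixed $x$, and H\"older continuity of $\ell$ absorbs the rounding error $|j_k(x)2^{-k}-x|\le 2^{-k}$, but neither gives uniformity of the convergence over the roughly $2^{k+m+1}$ dyadic levels simultaneously; that needs a quantitative rate plus Borel--Cantelli (as in Borodin, Khoshnevisan, or [OS]) or an a priori uniform-in-$k$ spatial modulus for $U^k$, which is essentially part (iii) again, making the argument as sketched circular.
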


\begin{proposition}
Under assumption \textbf{(H1-H2)}, the following approximation holds

\begin{equation}\label{convergence}
\int_0^t\int_{-2^m}^{2^m} U^{k}(s,x)dg^k(s,x)\rightarrow \int_0^t\int_{-2^m}^{2^m}\ell(s,x)dg(s,x)\quad \text{in}~L^1(\mathbb{P})
\end{equation}
as $k\rightarrow \infty$, for every $t\in [0,T]$.
\end{proposition}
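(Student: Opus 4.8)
The plan is to exploit the bilinearity of the $2$D Young integral together with the maximal inequality of Corollary~\ref{mainbound}, the uniform bivariation bounds of Lemma~\ref{lemmaLk}, and an interpolation argument converting the uniform (sup-norm) convergences $U^k\to\ell$ and $g^k\to g$ into genuine smallness in the relevant bivariation and control norms. First I would record that all integrals below exist by Theorem~\ref{tyoung}: since $U^k(0,\cdot)=\ell(0,\cdot)=0$, each integrand already vanishes on the line $s=0$, so it suffices to subtract its slice at $x=-2^m$. Writing $\hat U^k(s,x):=U^k(s,x)-U^k(s,-2^m)$ and $\hat\ell(s,x):=\ell(s,x)-\ell^{-2^m}(s)$, these corrected integrands vanish on both lines $s=0$ and $x=-2^m$, have the same $(1,2+\delta)$-bivariation norms as $U^k$ and $\ell$ (subtracting a function of $s$ alone does not alter the bivariation differences), and one has
$$\int_0^t\!\!\int_{-2^m}^{2^m}U^kdg^k=\int_0^t\!\!\int_{-2^m}^{2^m}\hat U^kdg^k+\int_0^t U^k(s,-2^m)\,d_s\big[g^k(s,2^m)-g^k(s,-2^m)\big],$$
and likewise for $\ell$ and $g$. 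The marginal $1$D Young integrals converge in $L^1(\mathbb{P})$ by the classical estimate~(\ref{in1}) combined with the same interpolation/uniform-integrability argument used below (here $U^k(\cdot,-2^m)\to\ell^{-2^m}(\cdot)$ uniformly with uniformly bounded $1$-variation, and $g^k(\cdot,2^m)-g^k(\cdot,-2^m)$ is uniformly controlled), so it remains to treat the doubly-vanishing $2$D parts, which I split by bilinearity as
$$\int\hat U^kdg^k-\int\hat\ell dg=\int(\hat U^k-\hat\ell)\,dg^k+\int\hat\ell\,d(g^k-g).$$

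For the first piece I apply Corollary~\ref{mainbound} to $F=\hat U^k-\hat\ell$ and $G=g^k$. The integrand tends to $0$ uniformly while its $(1,2+\delta)$-bivariation norms are uniformly bounded in $k$ (a.s. and in every $L^r$) by Lemma~\ref{lemmaLk}(ii)--(iii) and the corresponding finiteness for $\ell$. The key device is the elementary interpolation inequality
$$\|F\|_{1;p'}^{\,p'}\le\big(4\|F\|_\infty\big)^{p'-1}\|F\|_{1;1}\qquad(p'>1),$$
together with its analogue $\|F\|_{2;q'}^{\,q'}\le(4\|F\|_\infty)^{q'-(2+\delta)}\|F\|_{2;2+\delta}^{\,2+\delta}$ for $q'>2+\delta$, both following by bounding each increment of a difference of sections by $4\|F\|_\infty$. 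Raising the bivariation orders slightly from $(1,2+\delta)$ to $(1+\epsilon_1,2+\delta+\epsilon_2)$ thus forces $\|\hat U^k-\hat\ell\|_{1;1+\epsilon_1}\to0$ and $\|\hat U^k-\hat\ell\|_{2;2+\delta+\epsilon_2}\to0$ a.s. Because the exponent inequalities in~\textbf{(H2.1)} are strict, one keeps the same $\alpha$ and chooses $\epsilon_1,\epsilon_2>0$ small enough that $\alpha/(1+\epsilon_1)+1/q_1>1$ and $(1-\alpha)/(2+\delta+\epsilon_2)+1/q_2>1$, so the maximal inequality applies at the raised orders and its right-hand side tends to $0$; the accompanying corner term is bounded by $\|\hat U^k-\hat\ell\|_\infty$ times a bounded increment of $g^k$, hence also tends to $0$.

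The second piece, $\int\hat\ell\,d(g^k-g)$, is the main obstacle: the integrand $\hat\ell$ is fixed with bounded, nonvanishing bivariation, while the integrator difference only satisfies a \emph{uniform} control, $|\Delta_i\Delta_j(g^k-g)|\le 2M|\Delta s|^{1/q_1}|\Delta x|^{1/q_2}$ by~\textbf{(H2.1)}--\textbf{(H2.2)}, whose constant does not shrink; applying Corollary~\ref{mainbound} verbatim only yields a bound of order $2M\,\|\hat\ell\|_{1;1}^{\alpha}\|\hat\ell\|_{2;2+\delta}^{1-\alpha}$, which does not tend to $0$. The remedy is to interpolate the integrator instead: combining the trivial estimate $|\Delta_i\Delta_j(g^k-g)|\le 4\|g^k-g\|_\infty$ with the control above gives, for any $\theta\in(0,1)$,
$$|\Delta_i\Delta_j(g^k-g)|\le\big(4\|g^k-g\|_\infty\big)^{\theta}(2M)^{1-\theta}|\Delta s|^{(1-\theta)/q_1}|\Delta x|^{(1-\theta)/q_2},$$
i.e. a control with reduced exponents $(1-\theta)/q_1,(1-\theta)/q_2$ and constant proportional to $\|g^k-g\|_\infty^{\theta}\to0$. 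The strict inequalities in~\textbf{(H2.1)} again provide the slack to fix $\theta>0$ small with $\alpha+(1-\theta)/q_1>1$ and $(1-\alpha)/(2+\delta)+(1-\theta)/q_2>1$, so the maximal inequality applies to $\hat\ell$ and $g^k-g$ and returns a bound carrying the vanishing factor $\|g^k-g\|_\infty^{\theta}$; its corner term $\hat\ell(t,2^m)\,\Delta_i\Delta_j(g^k-g)$ over the whole rectangle also vanishes since $g^k\to g$ uniformly.

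Finally I would upgrade these a.s. bounds to $L^1(\mathbb{P})$. Each bound is a product of a factor tending to $0$ a.s. and staying bounded — a power of $\|\hat U^k-\hat\ell\|_\infty$ or of $\|g^k-g\|_\infty$, dominated by $\big(\sup_k\|U^k\|_\infty+\|\ell\|_\infty\big)$, which by Lemma~\ref{lemmaLk}(ii)--(iii) has finite moments of all orders — times bivariation norms bounded in every $L^r(\mathbb{P})$ uniformly in $k$. Splitting the expectation by H\"older, the vanishing factor converges to $0$ in the relevant $L^r$ by dominated convergence while the bivariation factors stay bounded, so each term converges to $0$ in $L^1(\mathbb{P})$. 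Reassembling the doubly-vanishing parts with the marginal $1$D contributions yields~(\ref{convergence}).
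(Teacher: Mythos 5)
Your route is genuinely different from the paper's, and for the most part it is sound. The paper proves almost sure convergence \emph{qualitatively}: it extends all functions by zero to a strip $[-2^m-1,-2^m)$ so that the integrands vanish on the left edge, and then invokes Young's convergence theorems (Theorems 6.3 and 6.4 of \cite{young1}); Corollary~\ref{mainbound} is used only once, to produce the dominating bound \eqref{better} for $\int U^k\,dg^k$, whose right-hand side involves \emph{only} $U^k$-quantities multiplied by deterministic constants coming from \eqref{l2.1}--\eqref{l2.3}, so that uniform integrability (via Lemma~\ref{lemmaLk}, Barlow's inequality and H\"older) plus Vitali's theorem finishes the proof. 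You instead make the convergence quantitative: the bilinear splitting into $\int(\hat U^k-\hat\ell)\,dg^k$ and $\int\hat\ell\,d(g^k-g)$, the two interpolation inequalities (raising the bivariation order of the integrand, lowering the control exponents of the integrator), and the use of the strictness in \textbf{(H2.1)} to absorb the losses, are all correct; your subtraction $\hat U^k=U^k-U^k(\cdot,-2^m)$ with a $1$D compensating integral is a clean substitute for the paper's extension-by-zero device, and your treatment of the first piece $\int(\hat U^k-\hat\ell)\,dg^k$, including its $L^1$ upgrade, works because there the vanishing factor is a power of $\|\hat U^k-\hat\ell\|_\infty$, which is dominated by $2\bigl(\sup_k\|U^k\|_\infty+\|\ell\|_\infty\bigr)$ and hence has all moments by Lemma~\ref{lemmaLk}.

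The genuine gap is in the final $L^1(\mathbb{P})$ step for the second piece $\int\hat\ell\,d(g^k-g)$ (and for the analogous marginal $1$D term driven by $g^k-g$). Your bound for this piece carries the factor $\|g^k-g\|_\infty^{\theta}$, and you assert that this factor is dominated by $\sup_k\|U^k\|_\infty+\|\ell\|_\infty$; that is simply the wrong dominator, since $\|g^k-g\|_\infty$ has nothing to do with $U^k$ or $\ell$. More seriously, no correct dominator is available from the hypotheses: \textbf{(H1)} is only almost sure uniform convergence, and \eqref{l2.1}, \eqref{l2.3} control double increments, not sup norms, so $\sup_k\|g^k-g\|_\infty$ is a.s.\ finite but not known to have any moments. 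Consequently neither the H\"older splitting nor dominated convergence applies as you stated, and the term $\mathbb{E}\bigl[\|g^k-g\|_\infty^{\theta}\,(\text{bivariation norms of }\hat\ell)\bigr]$ is not justified to converge to $0$. The repair is short: use your interpolated control only to conclude that $\int\hat\ell\,d(g^k-g)\to 0$ \emph{almost surely}, and dominate the sequence instead by applying Corollary~\ref{mainbound} with the non-interpolated control $2M|\Delta s|^{1/q_1}|\Delta x|^{1/q_2}$ from \eqref{l2.1}--\eqref{l2.3}, which gives the $k$-independent bound $C(M,\alpha,q_1,q_2,\delta,T,m)\bigl(\|\hat\ell\|_{1;1}^{\alpha}\|\hat\ell\|_{2;2+\delta}^{1-\alpha}+\|\hat\ell\|_{1;1}+\|\hat\ell\|_{2;2+\delta}\bigr)$; this is a fixed integrable random variable by Lemma~\ref{lemmaLk} and Fatou, so dominated convergence yields the $L^1$ limit. (The same two-bound trick fixes the marginal term, using \eqref{in1}.) Note that this repair is exactly the discipline the paper's own proof follows: every constant multiplying a random bivariation norm is kept deterministic, so that integrability never has to be extracted from $g^k-g$.
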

\begin{proof}
By \textbf{(H2.2)}, (ii,iii) in Lemma~\ref{lemmaLk} and Theorem~\ref{tyoung}, we know that the 2D Young integral $\int_0^t\int_{-2^m}^{2^m} U^{k}(s,x)dg^k(s,x)$ exists for every $k\ge 1$. We apply Lemma~\ref{lemmaLk}, Th. 6.3 and 6.4 in~\cite{young1} to get

$$\lim_{k\rightarrow \infty}\int_0^t\int_{-2^m}^{2^m} U^{k}(s,x)dg^k(s,x) =  \int_0^t\int_{-2^m}^{2^m}\ell(s,x)dg(s,x)$$
almost surely up to some vanishing conditions on $t=0$ and $x=-2^m$. They clearly vanish for $t=0$. For $x=-2^m$ we have to work a little. In fact, we will enlarge our domain in $x$, from $[-2^m,2^m]$ to $[-2^m-1,2^m]$, and define for all functions with $-2^m-1\le x < -2^m$, the value $0$, that is, for the functions $U^{k}(s,x), \ell^x(s), g(s,x)$ and $g^k(s,x)$ we put the value $0$, whenever $x\in [-2^m-1,-2^m)$. Then, it is easy to see, that all the conclusions of Lemma \ref{lemmaLk} still hold true, and in this case, $U^{k}(s,-2^m-1)=0$ and $\ell^{-2^m-1}(s)=0$ for all $s$. Thus, we can apply Theorems~6.3 ad 6.4 in~\cite{young1} on the interval $[0,t]\times [-2^m-1,2^m]$. It remains to show uniform integrability. By Corollary~\ref{mainbound}, we have

\begin{eqnarray}
\nonumber \Bigg|\int_0^t\int_{-2^m}^{2^m}U^k(s,x)d_{(s,x)}g^k(s,x)\Bigg|&\le& K_0U^k(2^m,T) + K\|U^k\|^{\alpha}_{1;1} \|U^k\|^{1-\alpha}_{2;2+\delta}\\
\nonumber & &\\
\label{better}&+& K_1\|U^k\|_{1;1} + K_2\| U^k  \|_{2;2+\delta}.
\end{eqnarray}
Here $K_0$ is a constant which comes from assumption~(\ref{l2.3}) and $K,K_1,K_2$ are positive constants which only depend on the constants of assumption $\textbf{(H2.1, H2.2)}$ namely $\alpha,q_1,q_2,\delta, T, m$. From Lemma~\ref{lemmaLk}, we have $\sup_{k\ge 1}\mathbb{E}\|U^{k}\|^{2+\delta}_{2;2+\delta} < \infty$, $\sup_{k\ge 1}\mathbb{E}\|U^{k}\|^{r}_{1;1} < \infty$ for every $r\ge 1$ and $\delta>0$. From Th.1 in~\cite{Barlow}, $\{U^{k}(2^m,T);k\ge 1\}$ is uniformly integrable, so we only need to check uniform integrability of $\{\|U^{k}\|^\alpha_{1;1}\|U^{k}\|^{1-\alpha}_{2;2+\delta}; k\ge 1 \}$. For $\beta > 1$, we apply H\"{o}lder inequality to get

$$\mathbb{E}\|U^{k}\|^{\beta\alpha}_{1;1}\|U^{k}\|^{(1-\alpha)\beta}_{2;2+\delta}\le \big(\mathbb{E}\|U^k\|_{2;2+\delta}\big)^{1/b}\big(\mathbb{E}\|U^k\|^{\alpha\beta d}_{1;1}\big)^{1/d};~k\ge 1$$
where $b=\frac{1}{(1-\alpha)\beta} >1,~d = \frac{b}{b-1} = \frac{1}{1-(1-\alpha)\beta}$ with $\alpha \in (0,1)$. Lemma~\ref{lemmaLk} allows us to conclude the proof.



\end{proof}


\end{document}